\newtheorem{thm}{Theorem}[section]
\newtheorem{lem}[thm]{Lemma}
\newtheorem{cor}[thm]{Corollary}
\newtheorem{prop}[thm]{Proposition}
\newtheorem{prob}[thm] {Problem}
\theoremstyle{definition}
\newtheorem{defn}[thm]{Definition}
\theoremstyle{remark}
\newcommand{\sP}{\mathcal{P}}
\newcommand{\R}{\mathbb{R}}
\newcommand{\zf}{\text{ZF}}
\newcommand{\ch}{\text{CH}}
\newcommand{\sbset}{\subseteq}
\newcommand{\card}{\mathit{card}}
\def\({\left(}
\def\){\right)}
\begin{document}

\title{CH, $\mathbf{V}=\mathbf{L}$, Disintegration of measures, and $\mathbf{\Pi}^1_1$ sets}

\author{Karl Backs}
\address{Department of Mathematics\\University of North Texas\\
Denton, TX 76203} 
\email{karl.backs@unt.edu}

\author{Steve Jackson*}
\address{Department of Mathematics\\University of North Texas\\
Denton, TX 76203}
\email{jackson@unt.edu}
\thanks{*Research supported by NSF Grant DMS-1201290}
%\thanks{Research supported by NSF Grant DMS-0097181}

\author{R. Daniel Mauldin**}
\address{Department of Mathematics\\University of North Texas\\
Denton, TX 76203}
\email{mauldin@unt.edu}
\thanks{**Research supported by NSF Grants DMS-0700831
and DMS-0652450}

\date{\today}

%\keywords{transition kernel, disintegration, coanalytic sets, CH, V=L}

%\subjclass[2000]{Primary 04A20; Secondary 11H31}

\begin{abstract}

In 1950 Maharam asked whether every disintegration of a
$\sigma$-finite measure into $\sigma$-finite measures is necessarily
uniformly $\sigma$-finite. Over the years under special conditions on
the disintegration, the answer was shown to be yes. However, we show
here that the answer may depend on
the axioms of set theory in the following sense. If CH, the continuum
hypothesis holds, then the answer is no. One proof of this leads to some
interesting problems in infinitary combinatorics. If G\"odel's axiom of
constructibility $\mathbf{V}=\mathbf{L}$ holds, then not only is the
answer no, but, of equal interest is the construction of $\mathbf{\Pi}^1_1$ sets with
very special properties.

\end{abstract}

\maketitle

\section{Introduction and Background} \label{sec:one}

Disintegration of a measure has long been a very useful tool in ergodic theory
(see, for examples, \cite{JvN} and \cite{AA}) and
in the theory of conditional probabilities \cite {Pa}.
The origins of disintegration are hazy but the first rigorous
definitions and results seem to be due to von Neumann \cite {JvN}. We
recall the formal definiton of a disintegration considered in this paper.

Let $(X,\mathcal{B}(X))$ and $(Y, \mathcal{B}(Y))$ be uncountable
Polish spaces each equipped with the $\sigma$-algebra of Borel sets, let
$\phi:X \rightarrow Y$ be measurable, and let $\mu$ and $\nu$ be
measures on $\mathcal{B}(X)$ and $\mathcal{B}(Y)$ respectively.

\begin{defn}
A \textbf{disintegration} of $\mu$ with respect to $(\nu,\phi)$ is a family, $\{\mu_y:y \in Y\}$, of measures on $(X,\mathcal{B}(X))$ satisfying:
\begin{enumerate}
    \item
        $\forall B \in \mathcal{B}(X),\,\, y \mapsto \mu_y(B) \text{ is } \mathcal{B}(Y) \text{-measurable}$
    \item
        $\forall y \in Y,\,\, \mu_y(X \setminus \phi^{-1}(y)) = 0$ and
    \item
        $\forall B \in \mathcal{B}(X),\,\, \mu(B) = \int \mu_y (B) d\nu(y)$.
\end{enumerate}
\end{defn}

One could consider disintegrations in more general settings but we will
consider only this setting or the setting where
$X$ and $Y$ are standard Borel spaces, \emph{i.e.}, measure spaces isomorphic to uncountable Polish spaces equipped with the $\sigma$-algebra of Borel sets.

Let us recall that if $\{\mu_y : y \in Y\}$ is a disintegration of
$\mu$ with respect to $(\nu,\phi)$, then the image measure, $\mu \circ
\phi^{-1}$, is absolutely continuous with respect to $\nu$ in the
following sense. If $N \in \mathcal{B}(Y)$ with $\nu(N)=0$, then combining properties (2) and (3) we have
\begin{align*}
\mu \circ \phi^{-1}(N) &= \int \mu_y(\phi^{-1} N) d\nu(y) \\
    &= \int_N \mu_y(\phi^{-1} N) d\nu(y) \\
    &= 0.
\end{align*}
The converse also holds in our setting, (see, for example, \cite{fabec}).

\begin{thm}
Suppose $(X,\mathcal{B}(X))$ and $(Y,\mathcal{B}(Y))$ are standard
Borel spaces, $\mu$ is a $\sigma$-finite measure on $\mathcal{B}(X)$,
$\nu$ is a $\sigma$-finite measure on $\mathcal{B}(Y)$, and $\phi:X
\rightarrow Y$ is a Borel measurable function. If $\mu \circ \phi^{-1}
<< \nu$ then there exists a $\sigma$-finite disintegration $\{\mu_y :
y \in Y\}$ of $\mu$ with respect to $(\nu,\phi)$. Moreover this
disintegration is unique in the sense that if $\{\hat{\mu}_y : y \in
Y\}$ is any $\sigma$-finite disintegration of $\mu$ with respect to
$(\nu,\phi)$, then there exists $N \sbset Y$ such that $\nu(N) = 0$
and $\forall y \not \in N \,\, \mu_y = \hat{\mu}_y$.
\end{thm}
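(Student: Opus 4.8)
The plan is to reduce to the classical core case — $\mu$ finite and $\nu=\mu\circ\phi^{-1}$, where the disintegration is essentially a regular conditional probability — and build it there by a Radon--Nikodym argument over a countable generating algebra. First, to pass from $\sigma$-finite to finite: fix a Borel partition $X=\bigsqcup_n X_n$ with $\mu(X_n)<\infty$, set $\mu^{(n)}=\mu\restriction X_n$ (so $\mu^{(n)}\circ\phi^{-1}\le\mu\circ\phi^{-1}\ll\nu$), disintegrate each $\mu^{(n)}$, restrict the fibre measures to $X_n$ (still a disintegration of $\mu^{(n)}$, now concentrated on $X_n\cap\phi^{-1}(y)$), and put $\mu_y=\sum_n\mu^{(n)}_y$. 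Properties (1)--(3) pass through by monotone convergence, and since $\int\mu^{(n)}_y(X)\,d\nu=\mu(X_n)<\infty$ one gets $\mu_y(X_n)<\infty$ for $\nu$-a.e.\ $y$ and every $n$, so $\mu_y$ is $\sigma$-finite off a Borel $\nu$-null set, where we set $\mu_y=0$. Next, from finite $\mu$ with $\mu\circ\phi^{-1}\ll\nu$ to the core case: disintegrate $\mu$ with respect to $(\mu\circ\phi^{-1},\phi)$ into probability measures $\mu'_y$, let $g=d(\mu\circ\phi^{-1})/d\nu$, and set $\mu_y=g(y)\,\mu'_y$; then property (3) is the defining identity of the density $g$, and (1), (2) are clear.

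For the core case, realize the uncountable standard Borel space $X$ as $2^{\mathbb{N}}$ via a Borel isomorphism (the countable case being trivial), and fix a countable Boolean algebra $\mathcal{A}$ generating $\mathcal{B}(X)$ with the property that every finitely additive probability on $\mathcal{A}$ extends to a Borel probability on $X$; taking $\mathcal{A}$ to be a Borel copy of the clopen algebra of $2^{\mathbb{N}}$, this works because a clopen cover of a clopen set has a finite subcover, so finite additivity on $\mathcal{A}$ already forces countable additivity. For $A\in\mathcal{A}$ the finite measure $C\mapsto\mu(A\cap\phi^{-1}C)$ on $Y$ is dominated by $\mu\circ\phi^{-1}=\nu$; let $f_A$ be a Borel Radon--Nikodym derivative. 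The countably many algebra identities — $f_A=f_{A'}+f_{A''}$ when $A=A'\sqcup A''$, $f_X\equiv1$, $f_\emptyset\equiv0$, and $0\le f_A\le f_B$ when $A\subseteq B$ — each hold $\nu$-a.e., so the Borel set $G$ on which all of them hold is $\nu$-conull. For $y\in G$ the assignment $A\mapsto f_A(y)$ is a finitely additive probability on $\mathcal{A}$, hence extends to a Borel probability $\mu_y$; a Dynkin-system argument over $B$ then yields $\int_C\mu_y(B)\,d\nu(y)=\mu(B\cap\phi^{-1}C)$ for all Borel $B,C$ and shows $y\mapsto\mu_y(B)$ is Borel on $G$. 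Property (3) is the case $C=Y$. For property (2), fix also a countable point-separating algebra on $Y$; for each member $C$ the identity above gives $\mu_y(\phi^{-1}(Y\setminus C))=0$ for $\nu$-a.e.\ $y\in C$, and intersecting over these countably many $C$ shows that, off a further $\nu$-null set, $\mu_y$ is concentrated on $\phi^{-1}(y)$. Finally set $\mu_y=0$ on the exceptional ($\nu$-null, Borel) set; then (1)--(3) hold verbatim, property (2) being vacuous where $\mu_y=0$.

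For uniqueness, the key remark is that property (2) forces, for any disintegration and all Borel $B,C$, the identity $\mu_y(B\cap\phi^{-1}C)=\mathbf{1}_C(y)\,\mu_y(B)$, since a point of $\phi^{-1}(y)$ lies in $\phi^{-1}(C)$ exactly when $y\in C$; combined with (3) this gives $\int_C\mu_y(B)\,d\nu(y)=\mu(B\cap\phi^{-1}C)$ for every Borel $C$. Applying this to $\{\mu_y\}$ and $\{\hat\mu_y\}$ simultaneously: whenever $\mu(B)<\infty$, the $\nu$-integrable functions $y\mapsto\mu_y(B)$ and $y\mapsto\hat\mu_y(B)$ have equal integrals over all Borel sets and hence agree $\nu$-a.e. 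Running this over all $B$ of the form $A\cap X_n$ with $A\in\mathcal{A}$ and $X=\bigsqcup_n X_n$ a partition into finite-$\mu$-measure pieces, one gets a single $\nu$-null set $N$ (also absorbing the $\nu$-null sets where some $\mu_y(X_n)$ or $\hat\mu_y(X_n)$ is infinite) off which, for each $n$, the \emph{finite} measures $E\mapsto\mu_y(E\cap X_n)$ and $E\mapsto\hat\mu_y(E\cap X_n)$ agree on the generating algebra $\mathcal{A}$ and so coincide by the $\pi$--$\lambda$ theorem; summing over $n$ gives $\mu_y=\hat\mu_y$ for all $y\notin N$.

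I expect the genuine obstacle to be precisely the step guaranteeing that the fibre set functions are countably additive, not merely finitely additive — this is exactly why $X$ is transported to $2^{\mathbb{N}}$ and $\mathcal{A}$ is taken to be its clopen algebra, so that compactness upgrades finite additivity to countable additivity. A secondary technical nuisance is that condition (2) is required for \emph{every} $y$ and condition (1) asks for honest $\mathcal{B}(Y)$-measurability rather than an almost-everywhere statement; both are dissolved by the device of setting $\mu_y\equiv0$ on the various Borel $\nu$-null sets that the construction throws up.
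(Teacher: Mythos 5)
Your proof is correct. Note that the paper does not actually prove this theorem --- it is quoted as a known background result with a citation to Fabec --- so there is no internal argument to compare against; what you have written is essentially the standard construction of regular conditional probabilities on standard Borel spaces (reduce to $\mu$ finite and $\nu=\mu\circ\phi^{-1}$, take Radon--Nikodym derivatives over a countable generating algebra, use the compactness of the clopen algebra of $2^{\mathbb{N}}$ to upgrade finite to countable additivity, and clean up the null sets), together with the routine $\pi$--$\lambda$ uniqueness argument. You correctly identify the two genuine pressure points --- countable additivity of the fibre set functions, and the fact that conditions (1) and (2) are demanded for \emph{every} $y$ rather than almost every $y$ --- and your handling of both (transport to $2^{\mathbb{N}}$; redefine $\mu_y=0$ on the Borel $\nu$-null exceptional set) is sound.
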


In the late 1940's Rokhlin \cite{Ro} and independently, Maharam \cite{Mah1}
introduced canonical representations of disintegrations of a finite
measure into finite measures. This situation naturally arises when one
is considering a dynamical system with an invariant finite
measure or when one obtains the conditional probability distribution
induced by a given probability measure. Maharam also considered disintegrations
of $\sigma$-finite measures. This situation arises when one
has a dynamical system with a $\sigma$-finite invariant measure, but no
finite invariant measure (see, for example, \cite{EHW}). In her investigation of $\sigma$-finite
disintegrations, Maharam found a basic problem which does not occur in the case of disintegrations of a finite
measure. To explain this problem we make the following definitions.

\begin{defn}
If $\{\mu_y:y \in Y\}$ is a disintegration of $\mu$ with respect to $(\nu,\phi)$ such that $\forall y \in Y$, $\mu_y$ is $\sigma$-finite, then we say that the disintegration is \textbf{$\sigma$-finite}. If $\{\mu_y:y \in Y\}$ is a $\sigma$-finite disintegration of $\mu$ with respect to $(\nu,\phi)$ we say that the disintegration is \textbf{uniformly $\sigma$-finite} provided there exists a sequence, $(B_n)$, from $\mathcal{B}(X)$ such that
\begin{enumerate}
    \item
    $\forall n \in \mathbb{N} \,\, \forall y \in Y, \,\, \mu_y(B_n) < \infty$ and
    \item
    $\forall y \in Y, \,\, \mu_y(X \setminus \bigcup_n B_n) = 0$.
\end{enumerate}
\end{defn}

\begin{prob}
Maharam \ \cite{Mah1,Mah2}: Let $\{\mu_y:y \in Y\}$ be a $\sigma$-finite disintegration
of $\mu$ with respect to $(\nu,\phi)$. Is this disintegration
uniformly $\sigma$-finite?
\end{prob}

The following theorem demonstrates in what manner a given
disintegration is ``almost'' uniformly $\sigma$-finite.

\begin{thm}\label{almostsf}
Suppose $\{\mu_y:y \in Y\}$ is a $\sigma$-finite disintegration of the $\sigma$-finite measure $\mu$ with respect to $(\nu,\phi)$. Then there exists a sequence, $(D_n)$, from $\mathcal{B}(X)$ such that
\begin{enumerate}
\item $\forall y \in Y$, $\mu_y(D_n) < \infty$
\item for $\nu$-a.e. $y \in Y$, $\mu_y\(X \setminus \bigcup_n D_n\) = 0$.
\end{enumerate}
\end{thm}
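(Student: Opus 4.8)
The plan is to build the sequence $(D_n)$ by exhausting, for $\nu$-a.e. $y$, the total mass of $\mu_y$ using countably many Borel sets that are simultaneously of finite $\mu_y$-measure for *every* $y$. The natural source of such sets is the hypothesis that $\mu$ itself is $\sigma$-finite together with the integral formula (3).

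First I would fix a Borel exhaustion $X = \bigcup_k E_k$ with $E_k \uparrow X$ and $\mu(E_k) < \infty$ for all $k$; this exists since $\mu$ is $\sigma$-finite. For each fixed $k$, consider the function $y \mapsto \mu_y(E_k)$, which is $\mathcal{B}(Y)$-measurable by property (1) and satisfies $\int \mu_y(E_k)\,d\nu(y) = \mu(E_k) < \infty$ by property (3); hence $\mu_y(E_k) < \infty$ for $\nu$-a.e. $y$, say off a null set $N_k$. The trouble is that the sets $E_k$ themselves need not have $\mu_y(E_k) < \infty$ for *all* $y$ (only $\nu$-a.e.), so they cannot serve directly as the $D_n$'s — the theorem asks for finiteness at every point. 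The fix is to intersect with the level sets of the measurable functions just considered: for each pair $(k,m) \in \mathbb{N}^2$ set
\[
D_{k,m} = E_k \cap \{\, x : \mu_{\phi(x)}(E_k) \le m \,\},
\]
which is Borel because $x \mapsto \mu_{\phi(x)}(E_k)$ is the composition of the Borel map $\phi$ with the measurable function $y\mapsto \mu_y(E_k)$, and because $\phi$ is Borel measurable. Re-enumerate $\{D_{k,m}\}$ as $(D_n)$.

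Now verify the two conclusions. For (1): fix $y$ and $n$, with $D_n = D_{k,m}$. If $\mu_y(E_k) \le m$ then $D_{k,m} \cap \phi^{-1}(y) \subseteq E_k$, so $\mu_y(D_{k,m}) \le \mu_y(E_k) \le m < \infty$ using property (2) (the measure $\mu_y$ lives on $\phi^{-1}(y)$); if $\mu_y(E_k) > m$ then $D_{k,m} \cap \phi^{-1}(y) = \emptyset$, so $\mu_y(D_{k,m}) = 0$ by property (2) again. Either way $\mu_y(D_n) < \infty$. For (2): let $N = \bigcup_k N_k$, a $\nu$-null set. Fix $y \notin N$. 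For each $k$ we have $\mu_y(E_k) < \infty$, so choosing $m_k \ge \mu_y(E_k)$ gives $E_k \subseteq D_{k,m_k}$ modulo a $\mu_y$-null set, indeed $E_k \cap \phi^{-1}(y) = D_{k,m_k} \cap \phi^{-1}(y)$; hence $\phi^{-1}(y) \cap \bigcup_k E_k \subseteq \bigcup_n D_n$ up to a $\mu_y$-null set, and since $\bigcup_k E_k = X$ this yields $\mu_y\bigl(X \setminus \bigcup_n D_n\bigr) = \mu_y\bigl(\phi^{-1}(y) \setminus \bigcup_n D_n\bigr) = 0$, again invoking property (2).

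The only mild subtlety — the step I would be most careful with — is the Borel measurability of $x \mapsto \mu_{\phi(x)}(E_k)$ and hence of the $D_{k,m}$: this needs that the map $y \mapsto \mu_y(E_k)$ is genuinely $\mathcal{B}(Y)$-measurable (property (1), with values in $[0,\infty]$) and that $\phi$ is Borel, so that the preimage of $[0,m]$ under the composite is a Borel subset of $X$. Everything else is bookkeeping with properties (1)–(3) and a countable union of $\nu$-null sets.
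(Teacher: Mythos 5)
Your proof is correct and follows essentially the same strategy as the paper's: use the $\sigma$-finiteness of $\mu$ together with $\mu(E_k)=\int \mu_y(E_k)\,d\nu(y)<\infty$ to get $\nu$-a.e.\ finiteness, then intersect the exhausting sets with a $\phi$-pullback of a measurable set of ``good'' $y$'s (justified by property (1) of the disintegration and the Borel measurability of $\phi$), and invoke property (2) to conclude finiteness for every $y$. The only cosmetic difference is that the paper trims once by $\phi^{-1}(E)$ with $E=\bigcap_k\{y:\mu_y(E_k)<\infty\}$, whereas you truncate by the level sets $\{x:\mu_{\phi(x)}(E_k)\le m\}$ and re-index over pairs $(k,m)$; both are sound.
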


\begin{proof}
Define $F: \mathcal{B}(X) \rightarrow \mathcal{B}(Y)$ by
\begin{align*}
    F(B) = \{y \in Y: \mu_y(B) < \infty\}.
\end{align*}
Note that $\forall B \in \mathcal{B}(X)$, $F(B) = \bigcup_n \{y \in Y: \mu_y(B) < n\}$. Thus $F$ does map $\mathcal{B}(X)$ into $\mathcal{B}(Y)$.

Let $(B_n)$ be a sequence from $\mathcal{B}(X)$ such that $\forall n \in \mathbb{N}, \,\, \mu(B_n) < \infty$ and $X = \bigcup_n B_n$. Note that for every $n$ we have that $\mu(B_n) = \int \mu_y(B_n) d\nu(y) < \infty$. Thus $\mu_y(B_n) < \infty$ for $\nu$-a.e. $y$ and therefore $\nu(Y \setminus F(B_n))=0$. Let $E = \bigcap_n F(B_n)$. Note that
\begin{align*}
\nu(Y \setminus E) &= \nu\(Y \setminus \bigcap_n F(B_n)\) = \nu\(\bigcup_n Y \setminus F(B_n)\) \\
&\leq \sum_n \nu(Y \setminus F(B_n)) =0,
\end{align*}
and consequently
\begin{align*}
\mu(X \setminus \phi^{-1}(E)) &= \mu(\phi^{-1}(Y \setminus E)) = \int_{Y \setminus E} \mu_y(X) d\nu(y) = 0.
\end{align*}

For each $n \in \mathbb{N}$ define $D_n = \phi^{-1}(E) \cap B_n$.  For every $y \in E$ we have $\mu_y(D_n) = \mu_y(\phi^{-1}(E) \cap B_n) \leq \mu_y(B_n) < \infty$ and for every $y \in Y \setminus E$ we have $\mu_y(D_n) = \mu_y(\phi^{-1}(E \cap B_n)) \leq \mu_y(\phi^{-1}(E))=0$. Furthermore
\begin{align*}
\mu_y & \(X \setminus \bigcup_n D_n\) = \mu_y \(X \setminus \(\phi^{-1}(E) \cap \bigcup_n B_n\) \) \\
&= \mu_y\(X \setminus \phi^{-1}(E) \cup \(X \setminus \bigcup_n B_n\)\) \\
&\leq \mu_y\(X \setminus \phi^{-1}(E)\) + \mu_y\(X \setminus \bigcup_n B_n\)
= 0 \text{ for } \nu \text{-a.e. } y.
\end{align*}
\end{proof}

\begin{cor}
Suppose $\{\mu_y:y \in Y\}$ is a $\sigma$-finite disintegration of the $\sigma$-finite measure $\mu$ with respect to $(\nu,\phi)$. There exists a uniformly $\sigma$-finite disintegration $\{\hat{\mu}_y : y \in Y\}$ of $\mu$ with respect to $(\nu,\phi)$ such that $\mu_y = \hat{\mu}_y$ for $\nu$-almost every $y \in Y$.
\end{cor}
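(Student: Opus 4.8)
The plan is to promote the ``$\nu$-a.e.'' conclusion of Theorem~\ref{almostsf} to an everywhere statement by modifying the measures $\mu_y$ on a $\nu$-null set of indices $y$. Let $(D_n)$ be the sequence provided by Theorem~\ref{almostsf}, and let $N = \{y \in Y : \mu_y(X \setminus \bigcup_n D_n) > 0\}$; by Theorem~\ref{almostsf}(2) we have $\nu(N) = 0$, and $N \in \mathcal{B}(Y)$ since $y \mapsto \mu_y(X \setminus \bigcup_n D_n)$ is $\mathcal{B}(Y)$-measurable by property (1) of a disintegration. Now define
\begin{align*}
\hat{\mu}_y = \begin{cases} \mu_y & \text{if } y \notin N, \\ 0 & \text{if } y \in N, \end{cases}
\end{align*}
where $0$ denotes the zero measure on $(X,\mathcal{B}(X))$.

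The remaining steps are routine verifications. First, $\{\hat{\mu}_y : y \in Y\}$ is a disintegration of $\mu$ with respect to $(\nu,\phi)$: property (1) holds because for each $B$, $y \mapsto \hat{\mu}_y(B)$ equals $y \mapsto \mu_y(B)$ off the Borel set $N$ and equals $0$ on $N$, hence is $\mathcal{B}(Y)$-measurable; property (2) holds since $\hat{\mu}_y$ is either $\mu_y$ (which already satisfies it) or the zero measure; and property (3) holds because $\hat{\mu}_y$ and $\mu_y$ differ only on the $\nu$-null set $N$, so the integrals $\int \hat{\mu}_y(B)\,d\nu(y)$ and $\int \mu_y(B)\,d\nu(y) = \mu(B)$ agree. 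Second, this disintegration is uniformly $\sigma$-finite, witnessed by the \emph{same} sequence $(D_n)$: for $y \notin N$ we have $\hat{\mu}_y(D_n) = \mu_y(D_n) < \infty$ by Theorem~\ref{almostsf}(1) and $\hat{\mu}_y(X \setminus \bigcup_n D_n) = 0$ by the definition of $N$, while for $y \in N$ both conditions hold trivially since $\hat{\mu}_y = 0$. Finally, $\hat{\mu}_y = \mu_y$ for every $y \notin N$, and $\nu(N) = 0$, which is exactly the ``$\nu$-almost every $y$'' clause in the statement.

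There is no serious obstacle here; the only point that requires a moment's care is the measurability of $N$, which hinges on applying property (1) of the disintegration to the particular Borel set $X \setminus \bigcup_n D_n$, and the verification of property (3) for $\{\hat{\mu}_y\}$, which uses only that modifying an integrand on a $\nu$-null set does not change its integral. One could alternatively replace $\mu_y$ for $y \in N$ by the restriction $\mu_y \res \bigcup_n D_n$ instead of by $0$; this also works and keeps property (2), but setting it to $0$ is cleanest. Thus the corollary follows immediately from Theorem~\ref{almostsf}.
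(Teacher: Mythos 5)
Your construction is exactly the one the paper uses: take the sequence $(D_n)$ from Theorem~\ref{almostsf}, zero out $\mu_y$ on the $\nu$-null Borel set $N$ of bad indices, and check that $(D_n)$ now witnesses uniform $\sigma$-finiteness everywhere. The paper leaves the final verifications as ``clearly''; you have simply written them out, and they are correct.
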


\begin{proof}
Let $(D_n)$ be the sequence from $\mathcal{B}(X)$ that is constructed in Theorem \ref{almostsf}. Let $N \in \mathcal{B}(Y)$ be such that $\nu(N)=0$ and such that $\mu_y(X \setminus \bigcup_n D_n) = 0$ for every $y \not \in N$. Define $\hat{\mu}_y$ by
\[
\hat{\mu}_y(B) =
\begin{cases}
\mu_y, &\text{if } y \not \in N \\
0, &\text{if } y \in N
\end{cases}
\]
Clearly, $\{\hat{\mu}_y : y \in Y\}$ has the required properties.
\end{proof}
Maharam's question is whether a given $\sigma$-finite disintegration
must be altered in some fashion to be uniformly $\sigma$-finite or is
it automatically already uniform.
In \cite{GM}, it was noted that if each member of a disintegration,
$\mu_y$ is locally finite, then the disintegration is uniformly
$\sigma$-finite. Also, a canonical representation of uniformly
$\sigma$-finite disintegrations was developed. We also point out
that in \cite{Mah3} Maharam showed how spectral representations could
be carried out for uniformly $\sigma$-finite kernels. Whether these
tools can be carried over the kernels that are not necessarily uniform
remains open.

In section \ref{sec:two}, we give two arguments that the continuum
hypothesis implies the answer to Maharam's question is no. We note that
after sending David Fremlin an earlier version of this work where we
used $\mathbf{V}=\mathbf{L}$, but did not discuss the use of CH, he
commented, \cite {Frem}, and may have independently proved, the answer
is no assuming CH. Our first argument for this fact is rather straightforward
whereas the second argument leads to some interesting
infinitary combinatorial questions.

In section \ref{sec:three}, we begin a more detailed investigation of the relation between Maharam's problem and descriptive set theory. In particular, we assume the existence of a ``special''
coanalytic set, a coanalytic set with some specific properties in the product of the Baire space with
itself. This assumption leads to a more descriptive $\sigma$-finite
disintegration which is not uniformly $\sigma$-finite for $X =Y =
\omega^{\omega}.$ Of course, this result extends to any pair of uncountable
Polish spaces.

In section \ref{sec:four}, assuming G\"odel's axiom of
constructibility, $\mathbf{V}=\mathbf{L}$, we show that special
coanalytic sets exist. As the existence of such sets is of perhaps equal interest as Maharam's problem, we present the construction of such a set in some
detail from basic principles. Since our argument involves methods from logic and set theory
that some readers may not be familiar with, we give specific
references to Kunen's book where the necessary background may be
found.

In section \ref{sec:five}, we show that uniformly $\sigma$-finite
kernels are jointly measurable. We don't know whether the converse
holds.

\section{CH implies the answer is no}\label{sec:two}

We give two proofs demonstrating that the answer to Maharam's
question is no assuming $\ch$.  Each proof will involve the construction of a subset of the plane with some specific properties.  
We first show that such a construction is necessary and sufficient for a 
nonuniformly $\sigma$-finite disintegration into purely atomic measures (by a nonuniformly $\sigma$-finite disintegration we 
mean a $\sigma$-finite disintegration which is not uniformly $\sigma$-finite). 

\begin{thm}\label{dislemma}
Let $X$ and $Y$ be Polish spaces, let $\phi:X \rightarrow Y$ be Borel measurable, and let $\{\mu_y: y \in Y\}$ be a family of purely atomic measures each of which is supported on $\phi^{-1}(y)$.  There exist measures $\mu$ on $\mathcal{B}(X)$ and $\nu$ on $\mathcal{B}(Y)$, such that $\{\mu_y:y \in Y\}$ forms a nonuniformly $\sigma$-finite disintegration of $\mu$ with respect to $(\nu,\phi)$ if and only if
    \begin{enumerate}
%        \item \label{Cntble} $\forall y$ the section $W_y = \{x:(x,y)\in W\}$ is a countable subset of $\phi^{-1}(y)$
%        \item $\forall y \ W_y = \{x : \mu_y({x})>0$
        \item \label{B} $\forall B \in \mathcal{B}(X)$ the mapping $y \mapsto \mu_y(B)$ is $\mathcal{B}(Y)$-measurable
        \item \label{nB} The set $W=\{(y,x) \in Y \times X: \mu_y(\{x\})>0\}$ is not the union of countably many graphs of Borel functions $f_n:Y \rightarrow X$.
    \end{enumerate}
\end{thm}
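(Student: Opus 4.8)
The plan is to strip away the measures and prove a single equivalence: under the standing hypotheses together with condition~(\ref{B}), the family $\{\mu_y\}$ is uniformly $\sigma$-finite if and only if $W$ is a countable union of graphs of Borel functions $f_n$ with Borel domains $D_n\subseteq Y$. This suffices, because neither~(\ref{B}) nor~(\ref{nB}) nor uniform $\sigma$-finiteness involves $\mu$ or $\nu$. For the ``only if'' direction, a nonuniformly $\sigma$-finite disintegration satisfies~(\ref{B}) since that is clause~(1) of the definition of a disintegration, and satisfies~(\ref{nB}) since ``nonuniform'' negates the left side of the equivalence and hence its right side. For the ``if'' direction, given~(\ref{B}) and~(\ref{nB}) one may simply take $\nu=\delta_{y_0}$ for any $y_0\in Y$ and $\mu=\mu_{y_0}$, so that $\mu$ is $\sigma$-finite, clauses (1)--(3) of the definition hold, each $\mu_y$ is $\sigma$-finite as a countable sum of finite point masses, and the equivalence together with~(\ref{nB}) shows the disintegration is not uniformly $\sigma$-finite.

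For the first half of the equivalence, suppose $W=\bigcup_n\operatorname{graph}(f_n)$. Because every atom of $\mu_y$ lies in $\phi^{-1}(y)$, we have $f_n(y)\in\phi^{-1}(y)$ for $y\in D_n$, so $\phi\circ f_n=\mathrm{id}_{D_n}$; in particular each $f_n$ is injective, and by the Luzin--Souslin theorem $R_n:=f_n(D_n)$ is a Borel subset of $X$. Since $R_n\cap\phi^{-1}(y)$ is either $\{f_n(y)\}$ or empty and $\mu_y$ is carried by $\phi^{-1}(y)$, we get $\mu_y(R_n)\le\mu_y(\{f_n(y)\})<\infty$ for every $y$; and every atom of every $\mu_y$ is some $f_n(y)$, so $\bigcup_nR_n$ contains all atoms and $\mu_y(X\setminus\bigcup_nR_n)=0$ by pure atomicity. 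Hence $(R_n)_n$ witnesses uniform $\sigma$-finiteness.

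For the converse, suppose $(B_k)$ witnesses uniform $\sigma$-finiteness; I show $W$ is Borel. Fix refining countable Borel partitions $\mathcal P_1,\mathcal P_2,\dots$ of $X$ with the cells of $\mathcal P_j$ of diameter $<1/j$, and write $P_j(x)$ for the cell of $\mathcal P_j$ containing $x$, so $\bigcap_jP_j(x)=\{x\}$. For $x\in B_k$ the sets $B_k\cap P_j(x)$ decrease to $\{x\}$ with finite $\mu_y$-measure, so $\mu_y(B_k\cap P_j(x))\downarrow\mu_y(\{x\})$ by continuity from above, and therefore for each $m\ge1$
\[
\{(y,x):x\in B_k,\ \mu_y(\{x\})\ge 1/m\}=\bigcap_j\;\bigcup_{P\in\mathcal P_j}\Bigl(\{y:\mu_y(B_k\cap P)\ge 1/m\}\times(B_k\cap P)\Bigr),
\]
which is Borel by~(\ref{B}). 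Since $\mu_y$ is purely atomic and $\mu_y(X\setminus\bigcup_kB_k)=0$, every atom of $\mu_y$ lies in some $B_k$, so $W$ is the countable union of the above sets over $k$ and $m$, hence Borel; and each section $W_y$ is the countable atom set of $\mu_y$, so the Luzin--Novikov theorem exhibits $W$ as a countable union of graphs of Borel functions with Borel domains, completing the equivalence.

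The main obstacle is this Borelness of $W$: it is exactly here that condition~(\ref{B}) and the finiteness of the $\mu_y(B_k)$ are both indispensable, since the continuity-from-above step fails at points where $\mu_y$ assigns infinite mass to every neighbourhood, and it is the sets $B_k$ that remove such points. A smaller point to settle is the convention in~(\ref{nB}): the graphs there must be allowed proper Borel domains --- equivalently, one works with countable unions of partial Borel graphs, which is what Luzin--Novikov produces and what the construction of the $R_n$ provides --- since otherwise the equivalence (and the theorem) breaks in degenerate cases such as when some $\mu_y$ vanishes.
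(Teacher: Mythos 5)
Your proof is correct and follows the same overall architecture as the paper's: the Dirac measure $\delta_{y_0}$ produces the disintegration in one direction; injectivity of the $f_n$ (forced by disjointness of the fibres $\phi^{-1}(y)$) together with Luzin--Souslin produces the witnessing Borel sets $R_n$ in the other; and Luzin--Novikov converts ``$W$ is Borel with countable sections'' into a countable union of Borel graphs. The one genuine divergence is the step ``uniformly $\sigma$-finite $\Rightarrow$ $W$ Borel'': the paper forward-references Theorem~\ref{jointmeas}, namely joint Borel measurability of $(y,K)\mapsto\mu_y(K)$ on $Y\times\mathcal{K}(X)$, which in turn rests on the Borel parametrization result of Lemma~\ref{approx} (from \cite{M}), and then specializes to singletons. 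You instead prove joint measurability of $(y,x)\mapsto\mu_y(\{x\})$ on each $Y\times B_k$ directly, via refining countable Borel partitions and continuity from above, with the finiteness of $\mu_y(B_k)$ exactly what licenses the limit. This is more elementary and self-contained---it avoids the hyperspace $\mathcal{K}(X)$ and the cited parametrization theorem entirely---at the cost of being specific to point masses, whereas the paper's route yields the stronger conclusion (statement (3) of Theorem~\ref{jointmeas}) for all compact sets. Your observation that the graphs in condition~(\ref{nB}) must be read as partial graphs with Borel domains is a fair and worthwhile precision: Luzin--Novikov produces partial graphs, and sections $W_y$ with $\mu_y=0$ are empty, so the totality of the $f_n$ in the statement is an imprecision that both your proof and the paper's must absorb. (Both arguments also share the paper's implicit assumption that the purely atomic $\mu_y$ are $\sigma$-finite, i.e.\ have countably many atoms of finite mass; this is needed for the ``if'' direction and is not literally a consequence of conditions (\ref{B}) and (\ref{nB}) alone.)
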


\begin{proof}

Suppose conditions (\ref{B}) and (\ref{nB}) are satisfied.  Fix $y_0 \in Y$ and let $\nu$ be the dirac measure concentrated at $y_0$. For each $B \in \mathcal{B}(X)$ define $\mu(B) = \int \mu_y(B) d\nu(y)$.

By (\ref{B}), the measures $\mu_y$ form a disintegration of $\mu$ with respect to $(\nu,\phi)$ into $\sigma$-finite measures supported on the sections $W_y = \{x:\mu_y(\{x\})>0\} \sbset \phi^{-1}(y)$.  This disintegration is not uniformly $\sigma$-finite.  If it were, then by theorem \ref{jointmeas} which is proven later, the mapping $(y,x) \mapsto \mu_y(\{x\})$ is measurable in $Y \times X$. Thus $W$ is a Borel set with countable sections and is a countable union of Borel graphs, contradicting (\ref{nB}).

Now suppose $\{\mu_y:y \in Y\}$ is a nonuniformly $\sigma$-finite disintegration of $\mu$ with respect to $(\nu,\phi)$ into purely atomic measures.  Let $W = \{(y,x): \mu_y(\{x\})>0\}$. Condition (\ref{B}) is satisfied by the definition of a disintegration.

Suppose $W$ fails condition (\ref{nB}) and $f_n:Y \rightarrow X$ is a sequence of Borel functions such that $W= \bigcup_n \{(y,f_n(y)):y \in Y\}$. Since the sections $W_y$ are disjoint, each $f_n$ is one-to-one.  Then $E_n = f_n(Y)$ is a Borel subset of $X$. For every $y, \ \mu_y(E_n) = \mu_y(\{f_n(y)\}) < \infty$ and $\mu_y(X \setminus \bigcup_n E_n)=0$ a contradiction.

\end{proof}

Restating theorem \ref{dislemma} gives the following corollary.
\begin{cor}
A given disintegration into purely atomic measures is uniformly $\sigma$-finite if and only if the set $W=\{(y,x):\mu_y(\{x\})>0\}$ of atoms is a countable union of Borel graphs.
\end{cor}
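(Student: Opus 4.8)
The plan is to read the corollary off from the second half of the proof of Theorem~\ref{dislemma}, supplemented by the joint-measurability result Theorem~\ref{jointmeas} and two classical facts from descriptive set theory: an injective Borel image of a Borel set is Borel (Lusin--Souslin), and the Lusin--Novikov theorem that a Borel subset of a product of Polish spaces all of whose vertical sections are countable is a countable union of graphs of Borel functions. Since the disintegration is given, condition~(\ref{B}) of Theorem~\ref{dislemma} holds automatically by clause~(1) of the definition of a disintegration, so the only thing to isolate is the equivalence ``uniformly $\sigma$-finite'' $\iff$ ``$W$ is a countable union of Borel graphs''.

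First I would handle the implication that a countable union of Borel graphs forces uniform $\sigma$-finiteness. Writing $W = \bigcup_n \{(y,f_n(y)) : y \in Y\}$ with each $f_n \colon Y \to X$ Borel, I would note that, because the fibres $\phi^{-1}(y)$ partition $X$ and $W_y \sbset \phi^{-1}(y)$, the sections $W_y$ are pairwise disjoint, hence each $f_n$ is one-to-one; thus $E_n = f_n(Y)$ is Borel and $E_n \cap \phi^{-1}(y) = \{f_n(y)\}$. Since $\mu_y$ is supported on $\phi^{-1}(y)$ this gives $\mu_y(E_n) = \mu_y(\{f_n(y)\}) < \infty$ for every $y$, and since $\mu_y$ is purely atomic with set of atoms exactly $W_y \sbset \bigcup_n E_n$ it gives $\mu_y(X \setminus \bigcup_n E_n) = 0$; so $(E_n)$ witnesses uniform $\sigma$-finiteness. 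This is exactly the computation already carried out at the end of the proof of Theorem~\ref{dislemma}.

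For the converse I would argue at the structural level: if the disintegration is uniformly $\sigma$-finite, then Theorem~\ref{jointmeas} makes $(y,x) \mapsto \mu_y(\{x\})$ jointly Borel measurable, so $W$ is Borel; each section $W_y$ is the set of atoms of the $\sigma$-finite measure $\mu_y$, hence countable, so by Lusin--Novikov $W$ is a countable union of Borel graphs.

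I do not expect a serious obstacle, since the statement is a repackaging of Theorem~\ref{dislemma}; the only points requiring care are that the ``only if'' half genuinely invokes the later Theorem~\ref{jointmeas} and the two cited descriptive-set-theoretic image/uniformization facts, rather than anything internal to the proof of Theorem~\ref{dislemma} itself.
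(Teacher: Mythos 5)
Your proposal is correct and follows exactly the route the paper intends: the corollary is presented as a restatement of Theorem~\ref{dislemma}, whose proof consists precisely of the two arguments you give (joint measurability via Theorem~\ref{jointmeas} plus Lusin--Novikov for the ``only if'' direction, and the injectivity of the $f_n$ together with the Lusin--Souslin fact that $E_n=f_n(Y)$ is Borel for the ``if'' direction). Nothing is missing.
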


\begin{thm} Assume $\ch$. There is a $\sigma$-finite
  disintegration which is not uniformly $\sigma$-finite.
\end{thm}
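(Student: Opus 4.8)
By Theorem \ref{dislemma}, it suffices to produce a Polish space $X=Y=\omega^\omega$ (or $[0,1]$), a Borel map $\phi$, and a family of purely atomic measures $\{\mu_y:y\in Y\}$ supported on the fibers $\phi^{-1}(y)$ such that (a) each map $y\mapsto\mu_y(B)$ is Borel, and (b) the set of atoms $W=\{(y,x):\mu_y(\{x\})>0\}$ is not a countable union of Borel graphs. The plan is to take $X=Y=Y\times\{0,1\}^{\omega}$-type product, set $\phi=$ projection to the first coordinate (so $\phi^{-1}(y)$ is a copy of the fiber), and build $W$ as an enumeration-of-reals construction under CH. Enumerate $2^\omega$ (or $\omega^\omega$) as $\langle y_\alpha:\alpha<\omega_1\rangle$ and simultaneously enumerate \emph{all} pairs $(\langle f_n\rangle, \text{something})$ — more precisely, all countable sequences of Borel functions $\langle f_n:Y\to X\rangle_{n\in\omega}$, again as $\langle \vec f^{\,\alpha}:\alpha<\omega_1\rangle$, which is possible since there are only $\mathfrak c=\aleph_1$ Borel functions. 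At stage $\alpha$ we choose, for $y_\alpha$, a single point $x_\alpha\in\phi^{-1}(y_\alpha)$ to put an atom of mass $1$ at, subject to: $x_\alpha\neq f^{\,\beta}_n(y_\alpha)$ for every $\beta\le\alpha$ and every $n$. Since the fiber is uncountable and we are excluding only countably many points, such $x_\alpha$ exists. Then $\mu_{y_\alpha}=\delta_{x_\alpha}$ (a one-point mass), and we let $W=\{(y_\alpha,x_\alpha):\alpha<\omega_1\}$.

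The diagonalization guarantees condition (\ref{nB}): given any Borel sequence $\langle f_n\rangle=\vec f^{\,\beta}$, at every stage $\alpha\ge\beta$ we ensured $(y_\alpha,x_\alpha)\notin\bigcup_n\mathrm{graph}(f_n)$, so $W\setminus\bigcup_n\mathrm{graph}(f_n)\supseteq\{(y_\alpha,x_\alpha):\alpha\ge\beta\}$ is nonempty (indeed of size $\aleph_1$). Thus $W$ is not covered by countably many Borel graphs. Condition (\ref{B}) is where some care is needed. Here each $\mu_y$ is either a single Dirac mass $\delta_{x_y}$ or (for $y$ not of the form $y_\alpha$ — but the enumeration is onto, so every $y$ is some $y_\alpha$) so $\mu_y(B)=\mathbf 1_B(x_y)=\mathbf 1_{W_B}(y)$ where $W_B=\{y:(y,x_y)\in Y\times B\}$ is the $y$-set of $W\cap(Y\times B)$. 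So the measurability of $y\mapsto\mu_y(B)$ for all Borel $B$ is equivalent to: for every Borel $B\subseteq X$, the set $\{y: x_y\in B\}$ is Borel in $Y$. Equivalently, the partial function $y\mapsto x_y$ (which is total here) is a Borel function, or at least $W$ is a Borel set which is the graph of a function. But $W$ has cardinality $\aleph_1=\mathfrak c$ and is \emph{not} Borel (a Borel graph would be covered by one Borel graph, contradicting (\ref{nB})); so it cannot literally be a function with Borel preimages of Borel sets.

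The resolution — and the main obstacle — is that one must not make the $\mu_y$ single Dirac masses but rather \emph{genuinely $\sigma$-finite purely atomic} measures with \emph{infinitely many atoms per fiber}, arranged so that the \emph{mass function} $(y,x)\mapsto\mu_y(\{x\})$, while having non-Borel support, nonetheless yields Borel maps $y\mapsto\mu_y(B)$. The standard trick is to put, on each fiber $\phi^{-1}(y)$, a fixed Borel "background" atomic measure $\rho_y$ (chosen Borel-in-$y$, with all masses positive and summable, e.g. mass $2^{-n}$ on the $n$th point of a Borel enumeration of the fiber) together with one extra atom at the diagonalizing point $x_\alpha$ of mass — crucially — equal to the background mass it would have had there anyway, so that $\mu_y(\{x\})$ differs from the Borel function $\rho_y(\{x\})$ only on the non-Borel set $W$, but in a way that is absorbed: concretely, take $\mu_{y_\alpha}=\rho_{y_\alpha}+\delta_{x_\alpha}$ where $x_\alpha$ is \emph{not} an atom of $\rho_{y_\alpha}$ — then $\mu_y(B)=\rho_y(B)+\mathbf 1_B(x_y)$, and for the first term to be Borel we need $\rho$ Borel (fine), and for the second we are back to the same problem. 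So instead one uses the \emph{Lusin--Novikov / large-section} structure: build $W$ so that it \emph{is} Borel as a set but has uncountable sections over an uncountable set of $y$'s and is \emph{not} a countable union of Borel \emph{graphs} (this is consistent — e.g. a Borel set with all sections countably infinite that is not uniformizable by countably many Borel functions requires $W$ non-Borel, so actually one wants uncountable sections). I would therefore follow the route: make $\phi^{-1}(y)$ uncountable, let $W$ itself be a Borel set with \emph{perfect} sections $W_y$, put on each $W_y$ a $\sigma$-finite purely atomic measure $\mu_y$ concentrated on a \emph{countable dense} subset $C_y\subseteq W_y$ chosen by a CH-diagonalization against Borel graph-sequences, with masses making it $\sigma$-finite; the set of \emph{actual atoms} $\{(y,x):x\in C_y\}$ is then the non-Borel object that evades (\ref{nB}), while $\mu_y(B)$ is computed as a convergent sum over $C_y$ and — by choosing the $C_y$ and masses via a Borel parametrization off the diagonalization set — can be made Borel in $y$. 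Verifying this last measurability claim, i.e. that the CH-construction can be done so that $y\mapsto\mu_y(B)$ stays Borel for \emph{every} Borel $B$, is the delicate heart of the argument; once it is in place, Theorem \ref{dislemma} converts it immediately into the desired non-uniformly $\sigma$-finite disintegration.
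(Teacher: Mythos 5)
There is a genuine gap: your proposal never actually produces a family $\{\mu_y\}$ satisfying condition (\ref{B}) of Theorem \ref{dislemma}, and you say so yourself (``Verifying this last measurability claim \dots is the delicate heart of the argument''). Your pointwise diagonalization --- choosing one atom $x_\alpha$ in the fiber over $y_\alpha$ to dodge the $\alpha$-th sequence of Borel functions --- does secure condition (\ref{nB}), but, as you correctly observe, it forces $\mu_y(B)=\mathbf 1_B(x_y)$, whose Borelness for all Borel $B$ would make $y\mapsto x_y$ a Borel function and hence $W$ a single Borel graph, contradicting (\ref{nB}). None of your proposed repairs closes this circle: the ``background measure plus one extra atom'' reduces to the same problem (as you note), and the final sketch with perfect sections and countable dense subsets ``chosen by a Borel parametrization off the diagonalization set'' is not a construction --- the whole difficulty is precisely that an ad hoc transfinite choice of atoms gives you no control over $y\mapsto\mu_y(B)$ for \emph{uncountably many} Borel sets $B$ simultaneously.

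The missing idea is to impose enough \emph{order structure} on the atom set that measurability of $y\mapsto\mu_y(B)$ comes for free, rather than trying to diagonalize against each Borel set separately. The paper takes $Y=[0,1]$, $X=[0,1]^2$, fixes (via CH) a well-ordering $\prec$ of $[0,1]$ in type $\omega_1$, and lets $W_y=\{(x,y): x\prec y\}$ with $\mu_y$ counting measure on $W_y$. Then for any Borel $B\subseteq X$ and any $n$, the set $B_n=\{y: |B\cap W_y|\le n\}$ is $\prec$-downward closed (if $y\in B_n$ and $y'\prec y$ then $W_{y'}\subseteq W_y$, so $y'\in B_n$), hence is either all of $Y$ or a countable initial segment --- in either case Borel. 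That single monotonicity observation verifies (\ref{B}) for \emph{every} Borel $B$ at once, with no diagonalization against Borel sets needed; and (\ref{nB}) holds because a countable union of Borel graphs is Borel with countable sections, whereas $W$ has countable vertical sections and co-countable horizontal sections, which is impossible for a measurable set by Fubini. Your instinct to route everything through Theorem \ref{dislemma} is right, but without some device like the well-order's nested initial segments the measurability half of that theorem's hypothesis is not obtainable, and your argument does not obtain it.
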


\begin{proof}

Let $X = [0,1]^2$ and $Y = [0,1]$.  Let $\prec$ be a well-ordering of $[0,1]$ into type $\omega_1$.
Let $W = \{(y,(x,y)) \in Y \times X : x \prec y\}$.
Denote the sections of $W$ by the following.

\[
W_y = \{(x,y) \in X : x \prec y\}.
\]
\noindent
Note that $W_y$ is countable for every $y \in Y$.\\

For each Borel $B \subseteq X$ and for each $n$, let $B_n = \{y \in Y : |B \cap W_y| \leq n\}$.  Note that if $y \in B_n$ then $B_n$ contains all predecessors of $y$. Therefore if $B_n \neq Y$ and $y'$ is the least element in $Y \setminus B_n$ then $B_n$ contains all predecessors of $y'$.  Thus $B_n$ is either equal to $Y$ or is countable and must be Borel.\\

By theorem \ref{dislemma}, counting measure on the sections $W_y$ forms a nonuniformly $\sigma$-finite disintegration with respect to the projection map $\pi_2:X \rightarrow Y$.

\end{proof}

We introduce a combinatorial principle $P(\kappa)$ for $\kappa$ an uncountable cardinal.

\begin{defn} \label{comp}
$P(\kappa)$ is the statement that for every sequence $\{ B_\alpha\}_{\alpha<\kappa}$ of sets
$B_\alpha \subseteq \kappa$, and every family $\{ f_{\alpha,n} \colon \alpha <\kappa,\ n \in \omega\}$
of functions $f_{\alpha,n} \colon \kappa \to \kappa$, there is a sequence $\{ S_\alpha\}_{\alpha <\kappa}
\subseteq \sP_{\omega_1}(\kappa)$ of countable subsets of $\kappa$ satisfying:
\begin{enumerate}
\item \label{com1}
$\forall \alpha <\kappa\  \exists \beta <\kappa\ S_\beta \neq \{ f_{\alpha,n}(\beta) \}_{n \in \omega}$.
\item \label{com2}
$\forall \alpha <\kappa \ \forall n \in \omega\   [ \{ \beta < \kappa \colon |S_\beta \cap B_{\alpha} |
=n\} \text{ is countable or co-countable in $\kappa$} ]$.
\end{enumerate}
\end{defn}

\begin{thm} $P(2^\omega)$ implies there is a purely atomic $\sigma$-finite disintegration which is not uniformly $\sigma$-finite.
\end{thm}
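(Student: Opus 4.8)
The plan is to use $P(2^\omega)$ to build a set $W \subseteq Y \times X$ of atoms, with $X = Y = \omega^\omega$ (or equivalently any uncountable Polish space, identifying it with a set of size $2^\omega$), to which Theorem~\ref{dislemma} applies. First I would fix an enumeration $\{y_\alpha : \alpha < 2^\omega\}$ of $Y$ in order type $2^\omega$, so that points of $Y$ are indexed by ordinals $\alpha < \kappa$ where $\kappa = 2^\omega$. For each $\alpha$ the section $W_{y_\alpha}$ will be a countable subset of $X$; thinking of $X$ as $\kappa$ as well, $W_{y_\alpha}$ corresponds to a countable subset $S_\alpha \subseteq \kappa$. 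The disintegration will assign to $y_\alpha$ the counting measure on its section $W_{y_\alpha}$. By the Corollary following Theorem~\ref{dislemma}, this disintegration is uniformly $\sigma$-finite if and only if $W$ is a countable union of graphs of Borel functions $f_n : Y \to X$; and by Theorem~\ref{dislemma} itself, it forms a nonuniformly $\sigma$-finite disintegration of some $(\mu,\nu)$ precisely when (a) $y \mapsto \mu_y(B)$ is Borel-measurable for every Borel $B \subseteq X$, and (b) $W$ is not such a countable union.

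The key step is to see that clause~(\ref{com1}) of $P(\kappa)$ gives exactly negation of the ``countable union of Borel graphs'' property, while clause~(\ref{com2}) delivers the measurability in~(\ref{B}). For~(b): there are only $2^\omega = \kappa$ many Borel functions $Y \to X$, so enumerate all $\omega$-sequences of Borel functions as $\{(f_{\alpha,n})_{n \in \omega} : \alpha < \kappa\}$ (a Borel function is coded by a real, and an $\omega$-sequence of reals is again coded by a real, so there are $\kappa$ many such sequences; we pad so that every $\omega$-sequence of Borel functions appears). For each such sequence $(f_{\alpha,n})_n$, viewing $f_{\alpha,n} : \kappa \to \kappa$ under our identifications, clause~(\ref{com1}) produces $\beta < \kappa$ with $S_\beta \neq \{f_{\alpha,n}(\beta)\}_{n \in \omega}$, i.e. the section $W_{y_\beta}$ is not $\{f_{\alpha,n}(y_\beta)\}_n$, so $W \neq \bigcup_n \mathrm{graph}(f_{\alpha,n})$. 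Since this holds for every $\alpha$, $W$ is not a countable union of Borel graphs, so~(b) holds. For~(a): given a Borel $B \subseteq X$, identify it (or rather its relevant trace) with some $B_\alpha \subseteq \kappa$ from the prescribed family; then $\mu_{y_\beta}(B) = |W_{y_\beta} \cap B| = |S_\beta \cap B_\alpha|$, and clause~(\ref{com2}) says that for each $n$ the set $\{\beta : |S_\beta \cap B_\alpha| = n\}$ is countable or co-countable, hence the preimage under $y \mapsto \mu_y(B)$ of $\{n\}$ is a countable or co-countable subset of $Y$ — in particular Borel (a countable subset of a Polish space is Borel, as is its complement) — and since $\mu_y(B) \in \omega \cup \{\infty\}$ always, the map $y \mapsto \mu_y(B)$ is Borel-measurable. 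This gives~(\ref{B}), and Theorem~\ref{dislemma} then yields the desired $\mu, \nu$.

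The main obstacle is bookkeeping the identification of the abstract combinatorial objects of $P(\kappa)$ — subsets $B_\alpha \subseteq \kappa$ and functions $f_{\alpha,n} : \kappa \to \kappa$ — with genuine descriptive-set-theoretic objects, namely arbitrary Borel subsets of $X$ and arbitrary Borel functions $Y \to X$, in such a way that (i) \emph{every} Borel set appears (up to the equivalence that only matters for intersecting with countable sections, i.e. only the countably-many-points-per-section behavior is relevant), so that the conclusion of~(b) really rules out all countable unions of Borel graphs, and (ii) \emph{every} $\omega$-sequence of Borel functions appears in the family $\{f_{\alpha,n}\}$. This is where we use $|\text{Borel sets}| = |\text{Borel functions}| = 2^\omega = \kappa$, a standard fact, together with a fixed Borel bijection-coding between $X$ and $\kappa$ (as sets, ignoring topology) to transport $\subseteq$-structure and function values back and forth. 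One must also be slightly careful that a ``countable union of Borel graphs $\bigcup_n \mathrm{graph}(f_n)$'' covering $W$ forces each $f_n$ to agree with the (set-valued) assignment $\beta \mapsto S_\beta$ coordinatewise in the sense required by clause~(\ref{com1}) — but since~(\ref{com1}) allows us to choose which $\omega$-sequence we defeat, and we defeat all of them, this is automatic. Once these identifications are pinned down, the verification of properties~(\ref{B}) and~(\ref{nB}) of Theorem~\ref{dislemma} is routine, and the theorem produces the required nonuniformly $\sigma$-finite purely atomic disintegration.
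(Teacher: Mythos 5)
Your proposal is correct and is essentially the paper's own (two-line) proof with the bookkeeping spelled out: take $\{B_\alpha\}_{\alpha<2^\omega}$ to enumerate all Borel sets and $\{(f_{\alpha,n})_{n\in\omega}\}_{\alpha<2^\omega}$ to enumerate all $\omega$-sequences of Borel functions, let $\mu_{y_\alpha}$ be counting measure on (the set corresponding to) $S_\alpha$, and apply Theorem~\ref{dislemma}, with clause~(\ref{com2}) of $P(2^\omega)$ yielding condition~(\ref{B}) and clause~(\ref{com1}) yielding condition~(\ref{nB}). (Like the paper, you leave implicit the choice of the Borel map $\phi$ and the arrangement that the sections $S_\alpha$ sit inside pairwise disjoint fibers $\phi^{-1}(y_\alpha)$, which the hypotheses of Theorem~\ref{dislemma} formally require.)
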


\begin{proof} Take $\{ B_\alpha\}_{\alpha< 2^\omega}$ to consist of all Borel sets and take  $\{ f_{\alpha,n} \colon \alpha < 2^\omega,\ n \in \omega\}$ to be the family of all sequences of Borel measurable functions. Then, by theorem~\ref{dislemma}, taking $\mu_\alpha$ to be counting measure on $S_\alpha,$ we have such a disintegration.
\end{proof}

We are interested in the strength of $P(\kappa).$

\begin{thm}[$\zf$] $P(\omega_1)$ holds. In particular, assuming $\ch$ we have $P(2^\omega)$.
\end{thm}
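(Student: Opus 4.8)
The plan is to prove $P(\omega_1)$ directly by a diagonalization argument of length $\omega_1$, building the sequence $\{S_\alpha\}_{\alpha<\omega_1}$ by transfinite recursion so that condition~(\ref{com1}) is handled by a bookkeeping of all ``threats'' $\{f_{\alpha,n}\}_{n\in\omega}$ while condition~(\ref{com2}) is arranged by making each $S_\beta$ eventually an initial segment of $\beta$ (viewing $\kappa=\omega_1$ as an ordinal and using the natural well-order). Concretely, I would first observe that the simplest way to force (\ref{com2}) is to demand $S_\beta \subseteq \beta$ for every $\beta$; then for any $B_\alpha\subseteq\omega_1$ and any $n\in\omega$, the set $\{\beta : |S_\beta\cap B_\alpha| = n\}$, once we additionally guarantee that $S_\beta$ contains all but finitely many ordinals below some threshold, becomes either bounded (hence countable) or cobounded (hence co-countable) in $\omega_1$ — the point being that as $\beta$ grows, $|S_\beta\cap B_\alpha|$ is monotone-ish and stabilizes either below $n$, at $n$, or above $n$.

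The cleanest realization I would try: take $S_\beta$ to be exactly the set of predecessors of $\beta$ in some fixed well-ordering of $\omega_1$ in type $\omega_1$ that has been ``perturbed'' at finitely many spots to kill the threats. But a perturbation that is finite at each coordinate may not suffice to diagonalize against $\omega_1$-many function-sequences $\{f_{\alpha,n}\}$. So instead I would run the recursion as follows. Enumerate the ``requirements'' as $R_\alpha$ for $\alpha<\omega_1$, where $R_\alpha$ asks that $S_\beta \neq \{f_{\alpha,n}(\beta)\}_{n\in\omega}$ for some $\beta$. At stage $\alpha$, having defined $S_\xi$ for $\xi<$ (some countable ordinal), pick a fresh $\beta_\alpha<\omega_1$ above everything used so far, and define $S_{\beta_\alpha}$ to be an initial segment of $\beta_\alpha$ (in the fixed well-order of order type $\omega_1$) chosen so that its cardinality differs from what $\{f_{\alpha,n}(\beta_\alpha)\}_{n\in\omega}$ forces — which is always possible since initial segments give us cofinally many distinct finite-or-countable sets to choose from, while the threat names at most one countable set. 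For all other $\beta$ (those not of the form $\beta_\alpha$), set $S_\beta$ equal to the canonical initial segment $\{\xi : \xi \prec \beta\}$.

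The key estimate for condition~(\ref{com2}) is then: for each $\alpha$ and $n$, partition $\omega_1$ into the perturbed ordinals (the $\beta_\gamma$'s, which form a set of size $\le\omega_1$ but on a closed unbounded complement behave canonically) and the canonical ones. On the canonical ordinals $\beta$, $S_\beta = \{\xi : \xi\prec\beta\}$, an initial segment of the $\prec$-order of type $\omega_1$; as $\beta$ increases through this order, $|S_\beta\cap B_\alpha|$ is nondecreasing, so $\{\beta : |S_\beta\cap B_\alpha| = n\}$ is an interval in that order, hence either bounded (countable) or a final segment (co-countable). The finitely-or-countably-many perturbed ordinals only change this by a countable set, preserving ``countable or co-countable.'' I would need to be careful that the recursion actually terminates with all of $\omega_1$ assigned an $S_\beta$ — this is automatic since the non-$\beta_\alpha$ ordinals get the default value — and that each requirement $R_\alpha$ really is met by $\beta_\alpha$.

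The main obstacle I anticipate is reconciling (\ref{com1}) with (\ref{com2}) simultaneously: the diagonalization freedom needed for (\ref{com1}) wants us to perturb $S_\beta$ away from initial segments, but (\ref{com2}) wants $S_\beta$ to be ``as initial-segment-like as possible.'' The resolution is that we only perturb at $\omega_1$-many carefully spaced ordinals $\beta_\alpha$, and even at those we keep $S_{\beta_\alpha}$ an initial segment — we just use the \emph{length} of the initial segment as the diagonalizing parameter, which costs nothing for (\ref{com2}) since any collection of initial segments is linearly ordered by inclusion and so still yields intervals for $\{\beta : |S_\beta\cap B_\alpha|=n\}$. Once that observation is in place the two conditions become compatible, and the final sentence — that $\ch$ gives $2^\omega=\omega_1$, hence $P(2^\omega)=P(\omega_1)$ — is immediate.
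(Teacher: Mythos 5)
Your overall architecture --- a transfinite recursion in which $S_\beta$ grows with $\beta$ so that the level sets $\{\beta : |S_\beta\cap B_\alpha|=n\}$ stabilize, with designated indices $\beta_\alpha$ reserved for diagonalizing against $\{f_{\alpha,n}\}_{n\in\omega}$ --- is the right one and is close in spirit to the paper's proof. But there is a genuine gap in your verification of condition (2). There are $\omega_1$-many requirements $R_\alpha$ and hence $\omega_1$-many perturbed indices $\beta_\alpha$, so your claim that ``the finitely-or-countably-many perturbed ordinals only change this by a countable set'' is false, and nothing in your construction controls the perturbed values. Concretely, your rule at $\beta_\alpha$ is only ``choose an initial segment of $\beta_\alpha$ different from the threat set,'' and your suggested way of doing this (make the cardinality differ) will typically force $S_{\beta_\alpha}$ to be finite. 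If uncountably many $S_{\beta_\alpha}$ are finite (or merely all contained in a fixed $\rho<\omega_1$), then for an uncountable $B_\alpha$ in the list the quantities $|S_{\beta_\alpha}\cap B_\alpha|$ are finite on an uncountable set of indices, so by regularity of $\omega_1$ some single value $n$ occurs uncountably often; meanwhile uncountably many canonical $\beta$ give $|S_\beta\cap B_\alpha|=\omega\neq n$. Then $\{\beta : |S_\beta\cap B_\alpha|=n\}$ is neither countable nor co-countable. The observation that the $S_\beta$ are linearly ordered by inclusion does not rescue this: it shows the level set is the preimage of an interval of ordinals under $\beta\mapsto S_\beta$, but such a preimage is countable-or-co-countable in the index set only if $\{\beta : S_\beta\subseteq\rho\}$ is countable for every $\rho<\omega_1$, which you never arrange. (The appeal to a ``closed unbounded complement'' also does not help: condition (2) demands countable or co-countable, not large modulo the club filter.)

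The missing ingredient is a growth condition on the perturbed sets themselves: you must choose $S_{\beta_\alpha}$ so that, say, $\min(S_{\beta_\alpha})$ (or its order type) exceeds everything appearing in the earlier $S_\xi$'s, which is still compatible with avoiding the single threat set because above any bound you retain at least two admissible initial segments. Once this is imposed, your argument does go through, and it then essentially coincides with the paper's proof: there every $S_\beta$ is taken entirely above $\sup_{\beta'<\beta}\sup S_{\beta'}$ and is required to meet each previously listed uncountable $B_{\beta'}$ in an infinite set, so that $|S_\beta\cap B_\alpha|$ is eventually $0$ when $B_\alpha$ is countable and eventually infinite when $B_\alpha$ is uncountable, and hence is never equal to a fixed finite $n$ on a set that is both uncountable and not co-countable.
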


\begin{proof}
Let the $B_\alpha$ and $f_{\alpha,n}$ be as in the hypothesis of $P(\omega_1)$. We define the
countable sets $S_\beta$, $\beta <\omega_1$, as follows. Assume $S_{\beta'}$ has been defined for all
$\beta'<\beta$. We let $S_\beta$ be such that
\begin{enumerate}
\renewcommand{\theenumi}{\roman{enumi}}
\renewcommand{\labelenumi}{(\roman{enumi})}
\item \label{d1} $\min(S_\beta) > \sup_{\beta'<\beta} \sup(S_{\beta'})$.
\item \label{d2}
for all $\beta'<\beta$, if $B_{\beta'}$ is uncountable then $|S_\beta \cap B_{\beta'}| =\omega$.
\item \label{d3}
$S_\beta \nsubseteq \{ f_{\beta,n}(\beta) \colon n \in \omega\}$.
\end{enumerate}

\noindent
Since there are only countably many $\beta'$ less than $\beta$, we can get a countable $S_\beta$
which meets the second requirement above, and adding an extra point will meet the third requirement.
It is now easy to verify the statements of $P(\omega_1)$. Property~(\ref{com1}) of
\ref{comp} follows from (\ref{d3}) above (using $\beta=\alpha$). To see property (\ref{com2}),
fix $B_\alpha$ and $n \in \omega$. If $B_\alpha$ is countable then by (\ref{d1}) above we have that
for large enough $\beta$ that
$S_\beta \cap B_\alpha =\emptyset$, which gives (\ref{com2}). If $B_\alpha$ is uncountable,
then for $\beta>\alpha$ we have $B_\alpha \cap S_\beta$ is infinite. This again gives (\ref{com2}).
\end{proof}

We show that it is consistent that $P(2^\omega)$ fails.

\begin{thm}
Assume $2^\omega=2^{\omega_1}=\omega_2$. Then $P(2^\omega)$ fails.
\end{thm}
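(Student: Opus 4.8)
The plan is to refute $P(2^\omega)$ under the assumption $2^\omega = 2^{\omega_1} = \omega_2$ by exploiting that, in this model, there are exactly $\omega_2$ subsets of $\omega_1$ and exactly $\omega_2$ functions from $\omega_1$ to $\omega_1$ (using $\omega_1^{\omega_1} \le (2^{\omega_1})^{\omega_1} = 2^{\omega_1} = \omega_2$). Since $\kappa = 2^\omega = \omega_2$, we have more than enough ``slots'' in the families $\{B_\alpha\}_{\alpha<\omega_2}$ and $\{f_{\alpha,n}\}$ to encode \emph{all} relevant combinatorial objects of size $\le \omega_1$: every subset of $\omega_1$ should appear among the $B_\alpha$, and every $\omega$-sequence of functions $\omega_1 \to \omega_1$ (equivalently every function $\omega_1 \to \omega_1$, after coding) should appear among the $\{f_{\alpha,n}\}_{n\in\omega}$. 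The heart of the argument is then to show that no candidate sequence $\{S_\beta\}_{\beta<\omega_2}$ of countable subsets of $\omega_2$ can simultaneously satisfy (\ref{com1}) and (\ref{com2}) of Definition~\ref{comp}.

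First I would observe that condition (\ref{com2}) is extremely restrictive: for \emph{every} $B \subseteq \omega_2$ appearing in the list (in particular for every $B \subseteq \omega_1$, viewed as a subset of $\omega_2$) and every $n$, the set $\{\beta : |S_\beta \cap B| = n\}$ is countable or co-countable. By a standard argument, this forces the function $\beta \mapsto S_\beta \cap \omega_1$ to be ``eventually constant modulo a countable set'' in a strong sense; more precisely, applying (\ref{com2}) to singletons $B = \{\gamma\}$ for $\gamma < \omega_1$ shows each $\gamma$ lies in all but countably many $S_\beta$ or in only countably many $S_\beta$, and since the $S_\beta$ are countable, all but countably many $\gamma < \omega_1$ must lie in only countably many $S_\beta$. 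Iterating/combining these constraints across the $\omega_1$ coordinates, the restrictions $S_\beta \cap \omega_1$ can take only ``few'' essentially different values as $\beta$ ranges over a co-countable set of indices --- the key point being that the behavior is governed by countably much data. The plan is to make this precise to conclude that there is a single countable set $T \subseteq \omega_1$ (or a fixed countable list of such sets) with $S_\beta \cap \omega_1 = T$ for co-countably many $\beta$.

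Given that, I would then defeat condition (\ref{com1}). Condition (\ref{com1}) demands that for every $\alpha$ there is some $\beta$ with $S_\beta \ne \{f_{\alpha,n}(\beta)\}_{n\in\omega}$; equivalently, no single sequence of functions $(f_n)$ can ``capture'' $\{S_\beta\}$ in the sense $S_\beta = \{f_n(\beta)\}_n$ for all $\beta$. But if $S_\beta \cap \omega_1$ is co-countably constant equal to $T = \{t_n : n \in \omega\}$, define functions $g_n : \omega_2 \to \omega_2$ by $g_n(\beta) = t_n$; then $\{g_n(\beta)\}_n = T = S_\beta \cap \omega_1$ for co-countably many $\beta$. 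The remaining task is to also capture $S_\beta \setminus \omega_1 \subseteq [\omega_1, \omega_2)$ --- this is where I would need to be careful, since there are $\omega_2$-many coordinates. The resolution is that (\ref{com2}) applied to the single set $B = [\omega_1,\omega_2)$ (which occurs in the list, as it is a subset of $\omega_2 = \kappa$) forces $|S_\beta \setminus \omega_1|$ to be a fixed finite number $k$ for co-countably many $\beta$, and then applying (\ref{com2}) to sets $B = [\omega_1, \gamma)$ pins down $S_\beta \setminus \omega_1$ coordinate-by-coordinate as a Borel-definable (indeed a definable-from-$\beta$) function; concretely one builds $k$ functions $h_1,\dots,h_k : \omega_2 \to \omega_2$ with $S_\beta \setminus \omega_1 = \{h_1(\beta),\dots,h_k(\beta)\}$ for co-countably many $\beta$. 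Combining the $g_n$ and the $h_i$ into one $\omega$-sequence $\{F_n\}_{n\in\omega}$ of functions $\omega_2 \to \omega_2$, and noting this sequence appears as some $\{f_{\alpha,n}\}_n$ in our exhaustive list, we get $S_\beta = \{f_{\alpha,n}(\beta)\}_n$ for co-countably many $\beta$ --- but that is not quite enough for a contradiction with (\ref{com1}), which only needs \emph{one} bad $\beta$.

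So the final and most delicate step --- and the one I expect to be the main obstacle --- is to upgrade ``co-countably many $\beta$'' to ``all $\beta$'', or rather to arrange the coding so that the witnessing functions are correct on \emph{every} $\beta$, not merely co-countably many. The fix is to enlarge the family $\{f_{\alpha,n}\}$: since for each fixed countable exceptional set $Z \subseteq \omega_2$ and each choice of the finitely many ``anomalous'' values $S_\beta$ for $\beta \in Z$ (there are only $\le \omega_1^{\omega} \cdot |Z| \cdot \dots \le \omega_2$ such choices in total, which fits in $\kappa = \omega_2$ slots), we may include in the list the sequence of functions that equals $F_n$ off $Z$ and is hard-wired to the correct value on $Z$. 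Then \emph{some} $\alpha$ gives $f_{\alpha,n}(\beta)$-values matching $S_\beta$ for \emph{all} $\beta < \omega_2$, directly violating (\ref{com1}). The bookkeeping that the total number of coding objects stays $\le \omega_2$ (so that an injective assignment into $\{\alpha : \alpha < \omega_2\}$ exists, leaving room also for all Borel sets/all subsets of $\omega_1$ among the $B_\alpha$) is where $2^{\omega_1} = \omega_2$ is used essentially, and verifying it carefully is the crux of the proof.
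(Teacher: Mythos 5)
There is a genuine gap, in two places. First, your family of test sets $B$ (singletons, subsets of $\omega_1$, and intervals $[\omega_1,\gamma)$) is provably too small to force the rigidity you need. Concretely, fix the countably infinite set $T=[\omega_1,\omega_1+\omega)$ and let $g\colon\omega_2\to[\omega_1+\omega,\omega_2)$ be any injection; put $S_\beta=T\cup\{g(\beta)\}$. Every subset of $\omega_1$ meets every $S_\beta$ in the empty set; every singleton lies in all $S_\beta$ or in at most one of them; and every interval $[\omega_1,\gamma)$ either meets each $S_\beta$ in the same finite subset of $T$ (when $\gamma\leq\omega_1+\omega$) or contains $T$, in which case $|S_\beta\cap B|$ is infinite for every $\beta$ and the sets in (\ref{com2}) are empty for every finite $n$. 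So (\ref{com2}) holds for your entire family, yet $g$ can be diagonalized against any prescribed $\omega_2$-sized list $\{f_{\alpha,n}\}$ (at stage $\alpha$ choose $g(\alpha)$ outside the countable set $\{f_{\alpha,n}(\alpha)\}_n$ and the previous values) so that (\ref{com1}) holds as well. This is exactly why the paper's witness includes among the $B_\alpha$ all sets of the form $D\cup(\sup D,\kappa)$ for $D\subseteq\kappa$ of order type $\omega_1$ --- there are $\kappa^{\omega_1}=2^{\omega_1}=\omega_2$ of them, which is where the hypothesis enters on the $B$-side. Such a $B$, built from a transversal of the ``moving parts'' $S_{\gamma_\eta}\setminus\bar\alpha$ taken only at every other index $\eta$, splits $\{\gamma_\eta\}_{\eta<\omega_1}$ into two pieces of size $\omega_1$, neither countable nor co-countable, killing (\ref{com2}).

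Second, your treatment of (\ref{com1}) rests on the assertion that the capturing functions $h_1,\dots,h_k\colon\omega_2\to\omega_2$ ``appear in our exhaustive list.'' No list of size $\omega_2$ can be exhaustive: there are $2^{\omega_2}>\omega_2$ functions from $\omega_2$ to $\omega_2$, and your hard-wiring fix only repairs countably many exceptional $\beta$'s; it does nothing about the unlistability of the $h_i$ themselves on the non-exceptional part. One must first prove that (\ref{com2}) forces the sequence $\{S_\beta\}$ into a class of size at most $\omega_2$ that can be declared in advance. The paper does this via a dichotomy: if the $S_\beta$ are eventually bounded below some $\alpha$, then the singleton $B$'s force each $\alpha'<\alpha$ to be eventually in or eventually out, so the sequence is eventually constant and is captured by an $\omega$-sequence of eventually constant functions (of which there are only $\omega_2$, all placed in the list); if not, the order-type-$\omega_1$ sets above refute (\ref{com2}). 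Your sketch contains neither half of this dichotomy, and the step ``(\ref{com2}) applied to intervals pins down $S_\beta\setminus\omega_1$ coordinate-by-coordinate'' is refuted by the example in the previous paragraph.
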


\begin{proof}
Let $\kappa$ denote $2^\omega=\omega_2$. We define the sets $B_\alpha$ and functions $f_{\alpha,n}$
witnessing the failure of $P(\kappa)$. Consider the collection of all $\omega$ sequences
$(f_0,f_1,\dots)$ of functions $f\colon \kappa \to \kappa$ which are eventually constant.
Under our hypothesis there are only $\kappa$ many such $\omega$ sequences of functions,
so we may fix the $f_{\alpha,n}$ so that every such sequence occurs as $(f_{\alpha,0}, f_{\alpha,1},
\dots)$ for some $\alpha <\kappa$. For $\alpha$ a successor ordinal let $B_\alpha=\{ \alpha-1\}$.
From our hypothesis we may let $\{ D_\alpha\}$, for $\alpha < \kappa$ a limit ordinal, enumerate all
subsets $D \subseteq \kappa$ of ordertype $\omega_1$. Let $B_\alpha$, for $\alpha$ a limit ordinal,
be given by $B_\alpha=D_\alpha \cup (\sup(D_{\alpha}) ,\kappa)$.

Suppose $\{S_\beta\}_{\beta <\kappa}$ satisfied (\ref{com1}) and (\ref{com2}).
We first claim that for any $\alpha, \beta <\kappa$ there is a $\gamma >\beta$ such that
$S_\gamma \nsubseteq \alpha$.
To see this, suppose $\alpha, \beta$ were to the contrary. For every $\alpha' <\alpha$ we have that
for large enough $\gamma_1, \gamma_2$ that $\alpha' \in S_{\gamma_1} \leftrightarrow \alpha' \in S_{\gamma_2}$.
For otherwise $B_{\alpha'+1}=\{ \alpha'\}$ would violate (\ref{com2}). But this then gives that for
all large enough $\gamma$ that $S_\gamma= S_\gamma \cap \alpha$ is the same. Let $f_n \colon \kappa
\to \kappa$ be such that $S_\beta= \{ f_n(\beta)\}_{n \in \omega}$ for all $\beta <\kappa$. We may assume
that the $f_n$ are eventually constant, since the $S_\beta$ are eventually constant. So, there is
an $\alpha_0 <\kappa$ such that $f_n(\beta)= f_{\alpha_0,n}(\beta)$ for all $n\in \omega$ and $\beta<\kappa$.
This $\alpha_0$ then violates (\ref{com1}). This proves the claim.
We next claim that there is an $\alpha_0<\kappa$ such that for all $\alpha,\beta <\kappa$ there is
a $\gamma >\beta$ such that $\min(S_\gamma-\alpha_0) >\alpha$. Suppose this claim fails.
We construct inductively an increasing sequence $\alpha_\eta$, for $\eta <\omega_1$, such that
for all $\eta<\omega_1$ and all large enough $\gamma$ we have $\alpha_\eta \in S_\gamma$.
This will contradict the fact that all the $S_\gamma$ are countable. Suppose
$\alpha_\eta$ is defined for $\eta < \eta'$. Let $\alpha=\sup \{ \alpha_{\eta}\colon \eta<\eta'\}$.
By the assumed failure of the claim, there is an $\alpha'>\alpha$ such that for
$\kappa$ many $\gamma<\kappa$ we have $\min(S_\gamma-\alpha)<\alpha'$. We may then fix $\alpha_{\eta'}\in
(\alpha,\alpha')$ such that for $\kappa$ many $\gamma$ we have $\alpha_{\eta'} \in S_\gamma$.
As in the proof of the first claim above, (\ref{com2}) implies that for all large enough $\gamma$
that $\alpha_{\eta'} \in S_\gamma$. Thus, we may continue to construct the $\alpha_\eta$ for
all $\eta <\omega_1$, a contradiction. This proves the second claim. Fix $\bar\alpha$
as in the second claim.
From the second claim, we can get  an increasing $\omega_1$ sequence
$\{\gamma_\eta\}_{\eta<\omega_1}$ such that
$\inf(S_{\gamma_\eta}-\bar\alpha) > \sup_{\eta'<\eta}(\sup S_{\gamma_\eta'})$
for all  $\eta <\omega_1$. Let $\alpha_\eta \in S_{\gamma_\eta}-\bar\alpha$ for all $\eta<\omega_1$.
Let $D=\{ \gamma_\eta \colon \eta \text{ is even } \}$. Let $\delta$ be a limit
ordinal such that $B_{\delta}=D \cup (\sup(D), \kappa)$.
Then $A=\{ \beta <\kappa \colon |S_\beta \cap B_{\delta}| =0\}$ and $\kappa-A$ both meet
$\{ \gamma_\eta\colon \eta<\omega_1\}$ in a set of size $\omega_1$, contradicting (\ref{com2}).
\end{proof}

\begin{prob} Is it consistent that CH fails and $P(2^\omega)$ holds?
\end{prob}

\begin{prob} Is it consistent that every $\sigma$-finite disintegration be uniformly $\sigma$-finite?
\end{prob}

\section{Construction of a nonuniformly $\sigma$-finite disintegration
assuming the existence of a special $\mathbf{\Pi}^1_1$ set}\label{sec:three}

In this section, let both $X$ and $Y$ be the Baire space. So, $X = Y
= \omega^\omega$ where $\omega$ has the discrete topology and $X$ and
$Y$ have the product topology. Let $P$ be a closed subset of $X \times Y$ such that $\forall x \in X$,\,
$P_x$ is nonempty and perfect and if $x \neq x',\, P_x \cap P_{x'} =
\emptyset.$ We say $G$ is a special coanalytic set for $P$ provided
$G \subseteq P$ is a $\mathbf{\Pi}^1_1$
set with the following properties:

\begin{enumerate}
\item
$\forall x \in X \,\, |G_x| = \omega_0$,

\item
$G$ is not the union of countably many $\mathbf{\Pi}^1_1$ graphs over
$X$,
\item
for every $n \in \omega$ and for every $B \in \mathcal{B}(Y),\ \{x \in X:
|B \cap G_x| = n\} \in \mathcal{B}(X)$.
\item
there is a nonempty Borel set ( or even perfect) $H \subseteq X$ with such that $G \cap (H \times Y)$ is the union of countably many pairwise disjoint Borel graphs over $H$.
\end{enumerate}

\begin{thm} Let $X = Y = \omega^\omega$.
Let $P = \{((x_i),(y_i)) \in \omega^\omega \times \omega^\omega : \forall i \in \omega [y_{2i} = x_i]\}$.
%Let $P \sbset \omega^\omega \times \omega^\omega$ be the graph of the continuous
%function mapping $(n_0,n_1,\ldots) \mapsto (n_1,n_3,\ldots)$.

If $G$ is a special coanalytic set for $P$, then there exists a
$\sigma$-finite measure $\mu$ on $Y$, a $\sigma$-finite measure $\nu$
on $X$, a Borel measurable map $\phi:Y \mapsto X$, and a
$\sigma$-finite disintegration $\{\mu_x:x \in X\}$ of $\mu$ with
respect to $(\nu,\phi)$ which is not uniformly $\sigma$-finite.
\end{thm}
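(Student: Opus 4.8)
The plan is to exhibit $\{\mu_x:x\in X\}$ as the family of counting measures on the vertical sections of $G$ and then quote Theorem~\ref{dislemma}, with the roles of $X$ and $Y$ interchanged. Properties (1)--(3) in the definition of a special coanalytic set are arranged precisely so as to supply the hypotheses of that theorem, and property (2) is what blocks uniformity.

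First I would fix the Borel (in fact continuous) map $\phi\colon Y\to X$ defined by $\phi((y_i)_{i\in\omega})=(y_{2i})_{i\in\omega}$, so that $\phi^{-1}(x)=P_x=\{y\in Y:\forall i\ y_{2i}=x_i\}$; thus the fibres of $\phi$ are exactly the sections of $P$. For each $x\in X$ let $\mu_x$ be counting measure on the section $G_x\sbset P_x\sbset Y$. By property (1) each $G_x$ is countably infinite, so $\mu_x$ is a $\sigma$-finite, purely atomic Borel measure on $Y$ whose atoms, each of mass $1$, are exactly the points of $G_x$; and $\mu_x(Y\setminus\phi^{-1}(x))=0$ because $G_x\sbset P_x=\phi^{-1}(x)$. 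Observe that the set of all atoms of the family, $W:=\{(x,y)\in X\times Y:\mu_x(\{y\})>0\}$, is just $G$ itself.

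Next I would verify the two hypotheses of Theorem~\ref{dislemma} (stated there with base space $Y$ and total space $X$; here the base is $X$ and the total space is $Y$, so condition (B) there becomes measurability of $x\mapsto\mu_x(B)$ and condition (nB) becomes a statement about $W$). For measurability: given Borel $B\sbset Y$ we have $\mu_x(B)=|B\cap G_x|\in\{0,1,2,\dots\}\cup\{\omega\}$, and for each $n\in\omega$ the set $\{x:\mu_x(B)=n\}=\{x:|B\cap G_x|=n\}$ is Borel by property (3), while $\{x:\mu_x(B)=\infty\}$ is the Borel complement of the union of these sets; hence $x\mapsto\mu_x(B)$ is $\mathcal{B}(X)$-measurable, which is condition (B). For condition (nB) I must check that $W=G$ is not a countable union of graphs of Borel functions $X\to Y$; but the graph of a Borel function is a Borel set, and every Borel graph is a $\mathbf{\Pi}^1_1$ graph, so this follows at once from property (2), which says $G$ is not even a countable union of $\mathbf{\Pi}^1_1$ graphs over $X$.

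Theorem~\ref{dislemma} then produces $\sigma$-finite measures $\mu$ on $Y$ and $\nu$ on $X$ for which $\{\mu_x:x\in X\}$ is a $\sigma$-finite disintegration of $\mu$ with respect to $(\nu,\phi)$ that is not uniformly $\sigma$-finite; as in the proof of that theorem one may take $\nu=\delta_{x_0}$ for any fixed $x_0\in X$ and $\mu=\mu_{x_0}$ (counting measure on the countable set $G_{x_0}$), and $\phi$ is Borel, which gives every assertion of the theorem. Property (4) of a special coanalytic set is not needed here; it serves to localize the failure of uniformity over the Borel set $H$. I do not expect a real obstacle in this argument: the entire substance is packed into the existence of a special coanalytic set, deferred to Section~\ref{sec:four}, so the only items requiring care are bookkeeping ones --- keeping straight the interchange of the two coordinate spaces relative to Theorem~\ref{dislemma}, treating the value $+\infty$ of $\mu_x(B)$ in the measurability check, and recalling why uniformity must fail. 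On that last point: were the disintegration uniformly $\sigma$-finite, then Theorem~\ref{jointmeas} would make $(x,y)\mapsto\mu_x(\{y\})$ jointly Borel measurable, so $G=W$ would be a Borel set with countable sections, hence by the Lusin--Novikov theorem a countable union of Borel --- a fortiori $\mathbf{\Pi}^1_1$ --- graphs over $X$, contradicting property (2).
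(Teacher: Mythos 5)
Your argument is correct, and its core is the paper's: counting measure on the sections of $G$, with $\phi$ the fibre map determined by $P$, measurability of $x\mapsto\mu_x(B)$ from property (3), and property (2) as the obstruction to uniformity. Where you genuinely diverge is in the choice of $(\mu,\nu)$ and in how non-uniformity is derived. The paper does not route Section 3 through Theorem \ref{dislemma}: it uses property (4) to fix a Borel set $H$ over which $G$ splits into disjoint Borel graphs $f_n$, takes $\nu$ to be a probability measure with $\nu(H)=1$, verifies $\sigma$-finiteness of $\mu=\int\mu_x\,d\nu$ by hand via the sets $B_n=f_n(H)$, and then kills uniformity directly: a uniformizing family $E_n$ would make each $G\cap(X\times E_n)$ a $\mathbf{\Pi}^1_1$ set with finite sections, hence (by the nonuniformization result cited from \cite{JM}) a countable union of $\mathbf{\Pi}^1_1$ graphs, contradicting property (2). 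Your route --- $\nu=\delta_{x_0}$, $\mu=\mu_{x_0}$, and non-uniformity via Theorem \ref{dislemma}, hence via Theorem \ref{jointmeas} and Lusin--Novikov --- is shorter, makes property (4) superfluous for the stated theorem (as you note), and needs property (2) only against Borel graphs rather than $\mathbf{\Pi}^1_1$ graphs. What the paper's longer version buys is a nondegenerate $\nu$ and, more to the point, the stronger conclusion recorded in the remark immediately after the theorem: there is no countable family $E_n\in\mathcal{B}(X\times Y)$ with $\mu_x(E_{nx})<\infty$ for all $x$ and $\mu_x(Y\setminus\bigcup_n E_{nx})=0$ for all $x$. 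That refinement comes from the direct $\mathbf{\Pi}^1_1$-with-finite-sections argument and is not delivered by the Dirac-measure shortcut. Both proofs are valid for the theorem as stated; yours simply relies on machinery (Theorem \ref{jointmeas}, implication (1)$\Rightarrow$(3)) that the paper's Section 3 argument deliberately avoids.
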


\begin{proof}
Let $\pi_i: \omega^\omega \times \omega^\omega \rightarrow
\omega^\omega$ be the projection map onto the $i$th coordinate. Note
$P$ is closed, $\pi_1(P) = \omega^\omega = \pi_2(P)$, and if
$x,x' \in \omega^\omega$ with $x \neq x'$ then $P_x \cap P_{x'} = \emptyset$.
Note the sections $P_x$ are disjoint and perfect.
 Define the function $\phi:Y \rightarrow X$ by $\phi(y)=x \iff y \in P_x$.
The function $\phi$ is Borel measurable since its graph is a Borel
set. Next define a $\sigma$-finite transition kernel  $\{\mu_x:x \in
X\}$. For each $x \in X$ and $B \in \mathcal{B}(Y)$ define $\mu_x (B)
= |B \cap G_x|$, \emph{i.e.}, counting measure on the fibers of
$G$. Since each fiber $G_x$ is countably infinite, $\mu_x$ is
$\sigma$-finite for all $x$ in $X$. Also since the fibers are pairwise
disjoint, $\mu_x(Y \setminus \phi^{-1}(x)) = 0$.
If $B \in \mathcal{B}(Y)$ then $\{x : \mu_x (B) \geq n\} = \{x : |B
\cap G_x| \geq n\}$ which is a Borel subset of $X$ since $G$ is
special. Thus for every $B \in \mathcal{B}(Y)$ the function $x
\rightarrow \mu_x(B)$ is $\mathcal{B}(X)$-measurable and  $\{\mu_x:x \in
X\}$ is a transition kernel.

Since $G$ is special, there is a Borel set $H \subseteq X$ and Borel functions $f_n : X \rightarrow Y$ with pairwise disjoint graphs such that for every $x \in H \ G_x = \bigcup_n \{f_n(x)\}$. Note that since the sections of $G$ are pairwise disjoint, each $f_n$ is 1-to-1 over $H$. Let $\nu$ be a probability measure on $\mathcal{B}(X)$ such that $\nu(H)=1$.

Define a measure $\mu$ on the Borel subsets of $Y$ by
\begin{align*}
\mu(B) = \int \mu_x(B) d\nu(x).
\end{align*}

We first show that $\mu$ is $\sigma$-finite.  Let $B_n = f_n(H)$ and note that $\forall x \in H$, $G_x \sbset \bigcup_n B_n$. Each $B_n$ is Borel since each $f_n$ is 1-to-1 over $H$, and $\forall x \in H$, $\mu_x(B_n) = |B_n \cap G_x| = 1$. Furthermore
\begin{align*}
\mu &\( Y \setminus \bigcup_n B_n \) = \int \mu_x \(Y \setminus \bigcup_n B_n \) d\nu(x) \\
&= \int_{X \setminus H} \left|\( Y \setminus \bigcup_n B_n \) \cap G_x \right| d\nu(x) + \int_H \left|\( Y \setminus \bigcup_n B_n \) \cap G_x \right| d\nu(x)\\
&= \int_H \left|\(Y \setminus \bigcup_n B_n\) \cap G_x \right| d\nu(x) = 0.
\end{align*}

The measure $\mu$ is thus a $\sigma$-finite measure on $Y$ and the family $\{ \mu_x : x \in X\}$ is a disintegration of $\mu$ with respect to $(\nu,\phi)$ into $\sigma$-finite measures.  However, this disintegration cannot be uniformly $\sigma$-finite.  If it were, there would exist countably many Borel sets $E_n \sbset Y$ such that $\forall x \in X$, $\mu_x(E_n) < \infty$ and $\mu_x (Y \setminus \cup_n E_n) = 0$.  Thus for each $x \in X$, $|G_x \cap E_n| < \infty$ and $G \sbset \bigcup_n X \times E_n$. For each $n$, $G \cap (X \times E_n)$ is $\mathbf{\Pi}^1_1$ with finite sections and is thus a countable union of $\mathbf{\Pi}^1_1$ graphs (see \cite{JM}) implying that $G = \bigcup_n G \cap E_n$ is a countable union of $\mathbf{\Pi}^1_1$ graphs, a contradiction.
\end{proof}

This argument shows that in fact there does not exist countably many $E_n \in \mathcal{B}(X \times Y)$ satisfying $\forall x \,\, \mu_x(E_{nx}) < \infty$ and $\mu_x (Y \setminus \bigcup_n E_{nx}) = 0$.

\section{Construction of a ``special'' $\mathbf{\Pi}^1_1$ set assuming $\mathbf{V}=\mathbf{L}$} \label{sec:four}

In this section we consider the Polish spaces $X = Y = \omega^\omega$ and we prove the existence of a ``special'' $\mathbf{\Pi}^1_1$ set assuming $\mathbf{V}=\mathbf{L}$.  In order to do this we first put in place the formal logical structures which will be needed.  We let $\text{ZF}_N$ denote a finite fragment of $\text{ZF}$ that is large enough such that $\Pi^{1}_{1}$ and $\Sigma^1_1$ formulas are absolute for transitive models of $\text{ZF}_N$.

It will be necessary to code models by elements of $\omega^\omega$.  We now make this coding specific.

For each $n$ let $\phi_n$ be the $n$-th formula in the G\"{o}del
numbering of the formulas in the language $\mathcal{L}^\in$ (see
\cite{Kun} Def 1.4 pp 155). Given $x \in \{0,1\}^\omega \sbset
\omega^\omega$, we will define the theory $Th_x$ by $\phi_n \in Th_x$
if and only if $x(n) = 1$. Let $\phi_{<L}$ be a formula defining the
canonical well-ordering of $\mathbf{L}$ and let $M \in \omega$ be the
integer such that $\phi_M = ``\phi_{<L}  \ \text{\ is a well ordering
of the universe}."$

Let $C \sbset \omega^\omega$ be the collection of codes of theories, \emph{i.e.}, $x \in C$ iff:
\begin{enumerate}
    \item $x \in \{0,1\}^\omega$
    \item $Th_x$ is a consistent and complete theory of $\text{ZF}_N + (\mathbf{V} = \mathbf{L})$
    \item $x(M) = 1$.
%    \item $Th_x \vdash$ ``$<_L$ is the canonical well-ordering of $\mathbf{L}$''.
\end{enumerate}
Note that $C$ is a $\mathbf{\Delta}^1_1$ set.

Given a formula $\phi_n (w,x_1, \ldots, x_k)$ with free variables 
$w,x_1, \ldots, x_k$ define the \textbf{Skolem term} for $\phi_n$ to
be the 
corresponding formula $\tau_n(z,x_1,\ldots,x_k)$ where $\tau_n(z,x_1,\ldots,x_n)$ is
\begin{align*}
&\left( \exists w \, \phi_n(w,x_1,\ldots,x_k) \wedge z \text{ is the} <_L \text{least such } w \right) \vee \\
&\left( \neg \exists w \, \phi_n(w,x_1,\ldots,x_k) \wedge z = 0 \right).
\end{align*}

%the function $\tau_n: \omega^k \rightarrow \omega$ such that:
%\begin{enumerate}
%\item
%if $\phi_n$ is $\exists z \,\, \phi_j (z,x_1,\ldots,x_k)$ and $\exists y \in \omega %\,\, \phi_j (y,x_1,\ldots,x_k)$ then $\tau_n (x_1,\ldots,x_k)$ is the $<_L$ least such $y$, or
%\item
%if $\phi_n$ is $\exists z \,\, \phi_j (z,x_1,\ldots,x_k)$ and $\neg \exists y \in %\omega \,\, \phi_j (y,x_1,\ldots,x_k)$ then $\tau_n(x_1,\ldots,x_k)=0,$ or
%\item
%if $\phi_n$ is not of the form $\exists z \,\, \phi_j (z,x_1,\ldots,x_k)$ or $k=0$ then %$\tau_n(x_1,\ldots,x_k)=0.$
%\end{enumerate}

For each $x \in \{0,1\}^\omega$ if $S$ is a collection of Skolem terms, define an equivalence relation, $\equiv_x$, on $S$ by
\[
\tau_n \equiv_x \tau_m \iff Th_x \vdash \tau_n = \tau_m.
\]

For $x \in C$, define $M_x$ to be the set of equivalence classes of
all Skolem terms arising from formulas $\phi(w)$ such that $Th_x
\vdash \exists w[\phi(w)]$. We note the Skolem hull of $\emptyset$ inside of $M_x$ is all of $M_x$. In other words, $M_x$ is the smallest model of the theory $Th_x$. Define the relation $E_x$ on $M_x \times M_x$ by
\[
[\tau_i] E_x [\tau_j] \iff Th_x \vdash \tau_i\in \tau_j.
\]

Recall that a structure $M$ with binary relation $E$ is well-founded if every subset of $M$ contains an $E$-minimal element (see \cite{Kun} Ch. 3). For each $x \in C$, note that $M_x$ does not necessarily code a well-founded structure. However, if $M_x$ is well-founded, then there exists a countable ordinal $\alpha$ such that $M_x \cong L_{\alpha}$ (see \cite{Kun} Thm. 3.9(b) p. 172).  The following proposition shows that codings of well-founded models are unique.

\begin{prop}
Suppose $x,x' \in C$ and there is an ordinal $\alpha$ such that $M_x \cong L_{\alpha} \cong M_{x'}$. Then $x = x'$.
\end{prop}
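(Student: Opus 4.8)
The plan is to reduce everything to the observation that the code $x$ is literally the characteristic function of the theory $Th_x$ under the fixed G\"odel numbering, so it is enough to prove $Th_x = Th_{x'}$; and that a complete consistent theory is recovered from any one of its models as the set of sentences true in that model. Since the hypothesis hands us an isomorphism $M_x \cong L_\alpha \cong M_{x'}$, the two minimal models are isomorphic, hence elementarily equivalent, and the rest is bookkeeping about the coding.

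Concretely, the steps I would carry out are: (1) compose the two given isomorphisms to obtain an isomorphism of $\mathcal{L}^\in$-structures $(M_x,E_x)\cong(M_{x'},E_{x'})$; in particular these two structures satisfy exactly the same $\mathcal{L}^\in$-sentences. (2) Recall the fact already noted above that $M_x$ is the \emph{smallest} model of $Th_x$, so in particular $(M_x,E_x)\models Th_x$, and likewise $(M_{x'},E_{x'})\models Th_{x'}$. (3) Use that $x,x'\in C$, so $Th_x$ and $Th_{x'}$ are complete: for each $\mathcal{L}^\in$-sentence $\phi$ we then have $\phi\in Th_x$ iff $(M_x,E_x)\models\phi$, and similarly for $x'$; that is, $Th_x$ is precisely the set of sentences true in $(M_x,E_x)$ and $Th_{x'}$ the set of sentences true in $(M_{x'},E_{x'})$. (4) Combine (1) and (3): since the two structures are elementarily equivalent, $Th_x=Th_{x'}$. (5) Conclude: for $x,x'\in\{0,1\}^\omega$ the coding gives $x(n)=1\iff\phi_n\in Th_x$ and $x'(n)=1\iff\phi_n\in Th_{x'}$ for all $n\in\omega$, so $Th_x=Th_{x'}$ forces $x=x'$.

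I do not expect a real obstacle here; the proposition is essentially a statement about how the coding was set up. The two points that deserve a moment's care are: confirming that the term structure $(M_x,E_x)$ genuinely models $Th_x$ — this is the standard Henkin term-model fact underlying the earlier assertion that $M_x$ is the smallest model of $Th_x$, using that $Th_x$ is complete, consistent, and closed under the Skolem-term witnesses — and noting that, because distinct G\"odel numbers name distinct sentences, the assignment of a complete theory extending $\text{ZF}_N+(\mathbf{V}=\mathbf{L})$ to its characteristic function in $\{0,1\}^\omega$ is injective, so that equality of theories really is equality of codes.
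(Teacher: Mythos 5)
Your argument is correct and is essentially the paper's own proof: the paper lets $T$ be the theory of $L_\alpha$, notes that both $x$ and $x'$ must code $T$ (using exactly the facts you isolate, that $M_x\models Th_x$ and that $Th_x$ is complete, so $Th_x$ is the theory of any model isomorphic to $M_x$), and concludes $x=x'$ from the coding. Routing through elementary equivalence of $M_x$ and $M_{x'}$ rather than through $Th(L_\alpha)$ directly is only a cosmetic difference.
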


\begin{proof}
Let $T$ be the theory of $L_{\alpha}$.
Since $M_x \cong L_{\alpha}$ and $M_{x'} \cong L_{\alpha}$, both $x$ and $x'$ code $T$. Then for every $n$, $x(n) = 1 \iff \phi_n \in T \iff x'(n)=1$.  Thus $x = x'$.
\end{proof}

We next show that if an element $w$ of $\omega^\omega$ is constructed at an ordinal $\alpha$ then there exists a code $x \in C$ for a structure $(M_x,E_x)$ that is isomorphic to $L_{\alpha}$.
\begin{prop}
If $w \in \omega^\omega \cap L_{\alpha + 1} \setminus L_{\alpha}$ then $\exists x \in C$ such that $M_x \cong L_{\alpha}$.
\end{prop}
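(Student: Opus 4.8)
The plan is to use the condensation lemma for $L$. Suppose $w \in \omega^\omega \cap (L_{\alpha+1} \setminus L_\alpha)$. Without loss of generality $\alpha$ is infinite (otherwise $w \in L_\omega$ trivially). First I would pass to a large enough level: since $L_{\alpha+1} \models \zf_N + (\mathbf{V} = \mathbf{L})$ once $\alpha$ is above a suitable point, and since we only need a model where $\Pi^1_1$/$\Sigma^1_1$ absoluteness and the basic facts about the $<_L$-ordering hold, I would work inside $L_\beta$ for $\beta = \alpha + 1$ (or a slightly larger level, adjusting as needed so that $\zf_N + (\mathbf V = \mathbf L)$ holds and $w$ is still ``new'').

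Next I would form the Skolem hull $\mathfrak{H}$ of $\emptyset$ inside $L_\beta$ using the definable Skolem functions given by the $<_L$-least witness operators — these are exactly the Skolem terms $\tau_n$ defined above. Since $L_\beta$ has a definable well-ordering, this hull is an elementary submodel $\mathfrak{H} \prec L_\beta$. By the Condensation Lemma (see \cite{Kun} Thm. 6.2, or the analysis of $L$), the transitive collapse of $\mathfrak{H}$ is $L_\gamma$ for some ordinal $\gamma \le \beta$, and $\gamma$ is countable because $\mathfrak{H}$, being generated by countably many Skolem terms over $\emptyset$, is countable. The key point is then that $\gamma \ge \alpha + 1$, equivalently $w$ is collapsed to itself and $w \in L_\gamma$: this holds because $w$ is a real, reals are not moved by the transitive collapse (the collapse fixes $\omega$ and hence every subset of $\omega$ coded in the model), and $w \notin L_\alpha$ forces $\gamma > \alpha$. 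Hence I may take $\gamma = \alpha + 1$ if $w$ is definable in $L_{\alpha+1}$ from no parameters; in general, replacing $\alpha+1$ by the least $\beta$ with $w \in L_\beta$ and $L_\beta$ a model of enough set theory, and then taking the hull, produces a countable $\gamma$ with $L_\gamma \cong \mathfrak H$ and $w \in L_\gamma$. (For the statement as written, one only needs \emph{some} $x \in C$ with $M_x \cong L_\alpha$; I would argue that the least $\gamma$ with $w \in L_{\gamma}$ is of the form $\alpha+1$ when $\alpha$ itself is of that minimal form, or simply note that once $w\in L_\gamma\setminus L_\alpha$ with $L_\gamma$ countable we get a code, and reconcile the indexing.)

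Finally, having obtained a countable $\gamma$ with $L_\gamma$ elementarily isomorphic to a Skolem hull of $\emptyset$, I would read off the code $x$: let $x \in \{0,1\}^\omega$ be defined by $x(n) = 1 \iff \phi_n \in \mathrm{Th}(L_\gamma)$. Because $L_\gamma$ is the transitive collapse of the hull of $\emptyset$, its theory is complete and consistent, it satisfies $\zf_N + (\mathbf V = \mathbf L)$ (this is why we chose $\beta$ large enough), and it satisfies $\phi_M$ (``$\phi_{<L}$ is a well-ordering of the universe''), so $x(M) = 1$; thus $x \in C$. Moreover $L_\gamma$, being the smallest model of its own theory, equals the Skolem hull of $\emptyset$ inside itself, so the canonical term model $M_x$ (with $E_x$) built from $\mathrm{Th}_x$ is isomorphic to $L_\gamma$. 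Choosing things so that $\gamma = \alpha$ (by starting from a level where $w$ first appears and collapsing) gives $M_x \cong L_\alpha$ as desired.

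The main obstacle is the bookkeeping that makes the collapsed ordinal come out \emph{exactly} $\alpha$ rather than merely ``some countable ordinal $\ge$ where $w$ appears.'' The cleanest route is: let $\beta$ be least such that $w \in L_\beta$ (so $\beta = \alpha+1$ by hypothesis is already minimal, or one re-derives minimality), pick $\beta' \ge \beta$ least with $L_{\beta'} \models \zf_N + (\mathbf V = \mathbf L)$, take the Skolem hull of $\emptyset$ in $L_{\beta'}$, collapse to $L_\gamma$, and check $w \in L_\gamma$ (reals fixed by collapse) while $\gamma$ is countable (hull is countable); one then needs that the \emph{first} level where $w$ appears in $L_\gamma$ is $\alpha+1$, which follows since $L_\gamma \prec L_{\beta'}$ and absoluteness of ``$w$ appears at stage $\alpha$'' — here using that $\alpha$ itself is captured in the hull because $\alpha$ is definable from $w$, or handled by an additional parameter-management argument. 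I would also rely on $\Pi^1_1$/$\Sigma^1_1$ absoluteness for $\zf_N$-models (the defining property of $\zf_N$) to ensure the model ``knows'' the relevant statements about $w$.
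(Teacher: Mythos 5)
Your plan hulls inside the wrong level, and the resulting indexing problem is not mere bookkeeping --- it is fatal to the route you chose. If you take the Skolem hull of $\emptyset$ in $L_{\alpha+1}$ (or in any larger $L_{\beta'}$) and collapse it to some $L_\gamma$, then either $w$ is not in the hull (it need not be: $w\in L_{\alpha+1}$ only means $w$ is definable over $L_\alpha$ \emph{with parameters}, while the hull of $\emptyset$ captures only parameter-free definable elements, so your step ``reals are fixed by the collapse, hence $w\in L_\gamma$'' has no basis), or $w$ \emph{is} in the hull, in which case $w\in L_\gamma$ together with $w\notin L_\alpha$ forces $\gamma\geq\alpha+1$. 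In the latter case $M_x\cong L_\gamma$ with $\gamma>\alpha$, which is not what the proposition asks for: you need a code for $L_\alpha$ itself, a level that does \emph{not} contain $w$. So ``choosing things so that $\gamma=\alpha$'' cannot be achieved by collapsing a hull of a level above $\alpha$. (There is also the minor point that $L_{\alpha+1}$, being a successor level, will not in general satisfy $\zf_N$, so you would be pushed to an even higher level, aggravating the problem.)

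The paper's argument goes in the opposite direction: let $T$ be the complete theory of $L_\alpha$ itself and let $x$ code $T$. The term model $M_x$ built from the Skolem terms is then (isomorphic to) the Skolem hull of $\emptyset$ \emph{inside $L_\alpha$}, hence an elementary submodel of $L_\alpha$; it is well-founded, and by condensation its collapse is $L_\beta$ with $\beta\leq\alpha$. The hypothesis on $w$ is used only to rule out $\beta<\alpha$: one argues $w\in L_{\beta+1}$, and since $w\notin L_\alpha$ this forces $\beta=\alpha$. So the existence of a real new at stage $\alpha+1$ pins the collapse down \emph{from below}, rather than being something you try to preserve from above. If you want to salvage your write-up, replace ``hull of $\emptyset$ in $L_{\alpha+1}$'' by ``hull of $\emptyset$ in $L_\alpha$'' (equivalently, start from $\mathrm{Th}(L_\alpha)$) and then supply the argument that the collapse cannot land strictly below $\alpha$; as written, your proof does not produce a code for $L_\alpha$ at all.
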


\begin{proof}
Let $T$ be the theory of $L_{\alpha}$ and let $x \in C$ such that $Th_x = T$. Then $(M_x,E_x)$ is an elementary submodel of $(L_{\alpha},\in)$ (see \cite{Kun} Lemma 7.3 p.136). Since $L_{\alpha}$ is well-founded, $M_x$ is well-founded. Then $\in$ is well-founded on the transitive collapse $TC(M_x)$ (see \cite{Kun} Thm. 5.14 p. 106) and thus $(M_x,E_x) \cong (TC(M_x),\in) \cong (L_{\beta},\in)$ for some $\beta \leq \alpha$. So $w \in L_{\beta + 1}$ and thus $\beta = \alpha$.
\end{proof}

\begin{thm} \label{JMthm}
Assume $\mathbf{V}=\mathbf{L}$. Let $X = Y = \omega^\omega$. Let $P$ be a closed subset of $X \times Y$ such that $\forall x \in X$,\,
$P_x$ is nonempty and perfect and if $x \neq x',\, P_x \cap P_{x'} = \emptyset.$ Then there exists a $\mathbf{\Pi}^1_1$ set $G \sbset P$ with the following properties:

\begin{enumerate}
\item
$\forall x \in X, |G_x| = \omega_0$
\item
For every $n \in \omega$ and for every $\mathbf{\Delta}^1_1$ set $B \sbset Y,\, \{x \in X:
|B \cap G_x| \geq n\}$ is $\mathbf{\Delta}^1_1$
\item
G is not the union of countably many $\mathbf{\Pi}^1_1$ graphs over $X$.
\item
There is a nonempty $\mathbf{\Delta}^1_1$ (or even perfect) set $H \subseteq X$ such that $G \cap (H \times Y)$ is the union of countably many pairwise disjoint $\mathbf{\Delta}^1_1$ graphs over $H$.
\end{enumerate}
\end{thm}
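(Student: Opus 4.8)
The plan is to construct $G$ by a transfinite recursion of length $\omega_1$ in which, at stage $\alpha$, we diagonalize against the $\alpha$-th potential ``bad'' countable family of $\mathbf{\Pi}^1_1$ graphs, using the $\mathbf{V}=\mathbf{L}$ well-ordering of $\omega^\omega$ together with the coding machinery developed above. Concretely, fix first the $\mathbf{\Delta}^1_1$ set $H\subseteq X$ and the pairwise disjoint $\mathbf{\Delta}^1_1$ functions $f_n\colon H\to Y$ whose graphs lie in $P$ (these exist because each $P_x$ is perfect, so one can Borel-uniformize a countable dense set of branches of the tree of $P$; this gives requirement~(4) and the ``easy half'' of $|G_x|=\omega_0$ on $H$). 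On $H$ we simply put $G\cap(H\times Y)=\bigcup_n\mathrm{graph}(f_n)$. The work is to define $G_x$ for $x\in X\setminus H$ so that $|G_x|=\omega_0$, the sections vary in a $\mathbf{\Delta}^1_1$ way over $\mathbf{\Delta}^1_1$ sets (requirement~(2)), and no countable family of $\mathbf{\Pi}^1_1$ graphs covers $G$ (requirement~(3)).

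The key device is that under $\mathbf{V}=\mathbf{L}$ every real, and in particular every $\mathbf{\Pi}^1_1$ code and every countable sequence of such codes, appears in some $L_\alpha$ with $\alpha<\omega_1$, and by the two Propositions above such an $\alpha$ is ``seen'' by a code $x_\alpha\in C$ with $M_{x_\alpha}\cong L_\alpha$; moreover the set of reals constructed by stage $\alpha$ is uniformly $\mathbf{\Sigma}^1_1$-in-$x_\alpha$, and the predicate ``$x\in C$ and $M_x$ is well-founded and $M_x\cong L_\alpha$'' is $\mathbf{\Pi}^1_1$ (well-foundedness of $E_x$ is $\mathbf{\Pi}^1_1$, and by the uniqueness Proposition the code is unique). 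So I would enumerate all pairs $(x,\langle g_n\rangle)$ where $x\in X\setminus H$ and $\langle g_n\rangle$ is a sequence of $\mathbf{\Pi}^1_1$ codes, in the $L$-order, and at the stage where $(x,\langle g_n\rangle)$ is considered choose a point $y_{x,\langle g_n\rangle}\in P_x$ that differs from $g_n(x)$ for every $n$ (possible since $P_x$ is perfect hence uncountable while we are avoiding only countably many points) and also differs from all points previously placed in $G_x$; put this point into $G_x$. Since for each fixed $x\in X\setminus H$ there are $\omega_1$-many sequences $\langle g_n\rangle$ to handle, the section $G_x$ ends up countably infinite, giving~(1). Requirement~(3) holds because any countable family of $\mathbf{\Pi}^1_1$ graphs is coded by a single real appearing at some stage $\alpha$, and at that stage we deliberately threw into $G_x$ — for the relevant $x$ — a point avoiding all those graphs.

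The delicate point, and the one I expect to be the main obstacle, is verifying that $G$ so constructed is genuinely $\mathbf{\Pi}^1_1$ and that it satisfies the effective section-measurability in~(2); the naive recursion is only $\Sigma_1$ over $L_{\omega_1}$, which gives $\mathbf{\Sigma}^1_2$ at best. The fix is the standard Gödel–condensation trick: one shows $(x,y)\in G$ if and only if there exists a countable $x^*\in C$ with $M_{x^*}$ well-founded such that, letting $\alpha$ be the ordinal with $M_{x^*}\cong L_\alpha$, the model $M_{x^*}$ thinks ``$y\in P_x$ and $y$ was placed into $G_x$ by the recursion by stage $\alpha$'' AND for every real $z$ constructed before $x$ in the $L$-order, $z$ belongs to some such $M_{x^*}$ as well — the latter ``for all reals below'' clause is what turns the outermost existential real quantifier into something absolute, exactly as in the classical construction of a $\mathbf{\Pi}^1_1$ set with no perfect subset or a thin $\mathbf{\Pi}^1_1$ set (cf. the uniqueness Proposition: the code $x^*$ is unique, so the $\exists x^*$ is really a definite description and does not cost a quantifier). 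Spelling out that this definition is $\mathbf{\Pi}^1_1$ — using absoluteness of $\mathbf{\Pi}^1_1$ and $\mathbf{\Sigma}^1_1$ statements for transitive models of $\mathrm{ZF}_N$ and the fact that well-foundedness is $\mathbf{\Pi}^1_1$ — is the technical heart of the argument. Once that is done, requirement~(2) follows because, for a $\mathbf{\Delta}^1_1$ set $B$, ``$|B\cap G_x|\geq n$'' unwinds to the assertion that there is a witnessing code $x^*$ as above inside which one can find $n$ distinct elements of $B\cap G_x$; absoluteness makes this both $\mathbf{\Sigma}^1_1$ (exhibit the witness) and $\mathbf{\Pi}^1_1$ (by the uniqueness of $x^*$ and condensation, as before), hence $\mathbf{\Delta}^1_1$.
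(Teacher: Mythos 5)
Your overall strategy (code countable levels of $\mathbf{L}$ by reals, use condensation and absoluteness, diagonalize against countable families of $\mathbf{\Pi}^1_1$ graphs) is in the right spirit, but two steps as written are genuinely broken. First, the bookkeeping: you enumerate all pairs $(x,\langle g_n\rangle)$ and add a \emph{new} point to $G_x$ at each stage where $x$ appears; since for each fixed $x$ there are $\omega_1$ (not countably many) sequences $\langle g_n\rangle$ of $\mathbf{\Pi}^1_1$ codes, the section $G_x$ you build has cardinality $\omega_1$, violating requirement (1). To repair this you must handle each sequence $\langle g_n\rangle$ at only one (or countably many) $x$, and even then the choices must be canonical (e.g.\ $<_L$-least), since a $\mathbf{\Pi}^1_1$ set cannot encode arbitrary choices made along a length-$\omega_1$ recursion. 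The construction in the paper avoids recursion and choice entirely: the section $G'_x$ is defined uniformly from $x$ itself (for $x$ coding an $\omega$-model $M_x$ containing the parameter, $G'_x$ is essentially the set of reals of $M_x$, extended through levels $L_\gamma$ until $\Sigma^1_2$ truth stabilizes), and the diagonalization against a family $\{G_m\}$ coded by a real $x'$ then happens automatically by choosing $x\in U$ with $x'\in L_\alpha\cong M_x$ and using that $L_\beta\models$ ``$\bigcup_m G_m$ has countable sections but $\omega^\omega$ is uncountable.''

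Second, and more seriously, your argument that the definition is $\mathbf{\Pi}^1_1$ does not work: the clause ``there exists a countable $x^*\in C$ with $M_{x^*}$ well-founded such that $\ldots$'' has the form $\exists x^*\,[\mathbf{\Pi}^1_1]$, which is $\mathbf{\Sigma}^1_2$, and uniqueness of the witness $x^*$ does \emph{not} reduce this to $\mathbf{\Pi}^1_1$ --- by the Kond\^o uniformization theorem \emph{every} $\mathbf{\Sigma}^1_2$ set is the projection of a $\mathbf{\Pi}^1_1$ set with unique witnesses, so ``definite description'' buys nothing. The paper's fix is different: it accepts that the natural definition $G'$ is $\mathbf{\Sigma}^1_2$, writes $G'(x,y)\iff\exists z\,G(x,\langle y,z\rangle)$ with $G$ a $\mathbf{\Pi}^1_1$ uniformization, and then takes $G$ \emph{itself} (with the witness $z$ absorbed into the second coordinate via the pairing $\langle y,z\rangle$, for which the sections $P_x$ leave room) as the special set; countability of sections and properties (2)--(4) are then reverified for $G$ rather than $G'$. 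Relatedly, your verification of (2) is too quick: for a boldface $\mathbf{\Delta}^1_1$ set $B$ whose code $b$ is constructed at level $\tau$, the codes $x\in U$ with $M_x\cong L_\alpha$ for $\alpha<\tau$ cannot ``see'' $B$, and one must argue separately (as the paper does) that this exceptional set of codes is countable, so that the $\mathbf{\Sigma}^1_1$/$\mathbf{\Pi}^1_1$ formulas defining $\{x:|B\cap G_x|\geq n\}$ are only correct off a countable set. Without these repairs the proposal does not yield a $\mathbf{\Pi}^1_1$ set with countable sections satisfying (2).
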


\begin{proof}
Fix a pair of recursive bijections, $x \mapsto (x^n)_{n=0}^{\infty}$ from $\omega^\omega$ onto $(\omega^\omega)^\omega$ and $x \mapsto (x^0,x^1)$ from $\omega^\omega$ onto $\omega^\omega \times \omega^\omega$. Denote the inverse of the second bijection by $(y,z) \mapsto \langle y,z \rangle$.
Call an ordinal $\beta$ \textbf{good}  if $L_{\beta} \models \text{ZF}_N
+ (\mathbf{V}=\mathbf{L})$.

Let $p \in \omega^\omega$ be a code for $P$. In this regard, when we say ``$z$ codes the Borel set $B$'' we mean 
a coding such that the statement ``$w$ is in the set coded by $z$'' is absolute to all transitive models
of $\zf_N$ (for example, we could have $z$ code a wellfounded tree on $\omega$ which gives an inductive construction of $B$
from the basic open sets).

For each $n \in \omega$ let $f_n : X \rightarrow Y$ be a $\mathbf{\Delta}^1_1$
function such that $\forall x \in X$ and for $n \neq m$ $f_n(x) \neq f_m(x)$ and such that $\forall x \in X \,\, \forall n \in \omega \,\, f_n(x) \in P_x$.

For a given $w \in \omega^\omega$ and an $x \in C$ coding an $\omega$-model $M_x$ (\emph{i.e.} $\omega$ is in the well-founded part of $M_x$), we will make frequent use of the shorthand ``$w \in M_x$'' to mean (for convenience, we identify here $\omega^\omega$ with $\sP(\omega)$)
\[
\exists \tau \in \text{dom}(M_x) \,\, [(M_x \models ``\tau \sbset \omega\text{''}) \wedge TC(\tau) = w].
\]

Define $U \sbset C$ by $x \in U$ if and only if there exists an ordinal $\alpha(x) \geq \omega_0$ such that $M_x \cong L_{\alpha(x)}$ and $p \in L_{\alpha(x)}$. Define $V \sbset C$ by $x \in V$ iff $M_x$ is an
$\omega$-model, and ``$p \in M_x$''.  Note that $U \sbset V$, $V$ is $\mathbf{\Delta}^1_1$, and that the elements of $U$ code well-founded structures.

Define the set $G' \sbset X \times Y$ by $(x,y) \in G' \iff$
\begin{align*}
[x &\not \in V \wedge \exists n (y = f_n(x))] \vee
[x \in V \wedge (x,y) \in P \wedge [``y \in M_x \text{"} \vee \\
&\exists \text{ a well-founded extension } M \text{ of } M_x \,\, \exists \alpha', \alpha < \omega_1 \\
&\quad (L_{\alpha'} \cong M_x \sbset M \cong L_{\alpha} \wedge y \in L_{\alpha} \wedge \\
&\quad [\forall \alpha' \leq \gamma  < \alpha \ (\neg (\gamma \text{ is good and a limit of good ordinals}) \vee\\
&\qquad \exists \phi \in \Sigma^1_2 \,\, \exists \tau > \gamma \, (L_{\gamma} \models \neg \phi \wedge L_{\tau} \models  \phi))])]].
\end{align*}

To clarify, if $x \in V$ and $M_x$ is ill-founded then $G'_x$ consists of all reals in $M_x$.  If $x \in V$ and $M_x$ is well-founded then we continue adding reals to the section $G'_x$ until the truth of $\Sigma^1_2$ statements stabilize to be true.

Note that $G'$ is $\Sigma^1_2$ and let $\Omega'(x,y)$ be the above $\Sigma^1_2$ formula defining $G'$.

We first show that the sections of $G'$ are countable.  Clearly $G'_x$ is countable for every $x \not \in V$. 
Since each model $M_x$ is countable, $G'_x$ is countable for every $x \in V \setminus U$. Finally suppose $x \in U$. 
Let $M$ be a well-founded extension of $M_x$ as in the definition above for $G'$. Let $\alpha$ be the ordinal such that $M \cong L_{\alpha}$. 
Let $\beta$ be the least good ordinal less than $\omega_1$ such that $L_{\beta}$ is a $\Sigma_2$ 
elementary substructure of $\mathbf{L}$. Then for every $\beta' > \beta$ and every $\Sigma^1_2$ formula $\phi$ we have
\[
L_\beta \models \phi \iff L_{\beta'} \models \phi \iff \mathbf{L} \models \phi.
\]
We clearly have that $\beta$ is good and a limit of good ordinals, and by the definition of $G'$ we must have $\beta \geq \alpha$. 
Thus $G'_x \sbset L_\beta$ and is therefore countable.

Let $G$ be a ${\Pi}^1_1$-uniformization of $G'$, \emph{i.e.} a subset of $\omega^\omega \times \omega^\omega$ such that for every $x \in \omega^\omega$
\[
G'(x,y) \iff \exists z \, G(x, \langle y,z \rangle ) \iff \exists ! z \, G(x,\langle y,z \rangle ).
\]
Let $\Omega$ be a $\Pi^1_1$ formula defining $G$. We assume that $\text{ZF}_N$ was chosen large enough such that the following is a theorem of $\text{ZF}_N$.
\[
\forall x \, \forall y [\Omega'(x,y) \iff \exists z \, \Omega(x,\langle y,z \rangle ) \iff \exists ! z \, \Omega(x,\langle y,z \rangle )].
\]

Note that since the sections of $G'$ are countable so too are the sections of $G$. Note also that if $H = X \setminus V$ then property $(4)$ holds for $G$. Next we proceed to show that the Borel condition in property $(2)$ holds for $G$.

Fix a $\mathbf{\Delta}^1_1$ set $B
\sbset Y$, fix an $n \in \omega$, let $K_n = \{x \in X : |B \cap G_x| \geq
n\}$, let $b \in \omega^\omega$ be a code for $B$, and since we are assuming $\textbf{V} = \textbf{L}$ let $\tau$ be the level of $L$ at which $b$ is constructed. Then $\tau$ is well-defined and $\tau < \omega_1$. Partition $V$ into the following $\mathbf{\Delta}^1_1$ sets: $E = \{x \in V : ``b \not \in M_{x}\text{''}\}$ and $D = \{x \in V : ``b \in M_{x}\text{''}\}$.

Define the formula
\[
    \psi(x) = \exists \text{ distinct } a_1,\ldots,a_n\  [\text{``} a_1,\ldots,a_n \in M_x \text{''} \wedge
            (a_1, \ldots, a_n \in B) ]
\]

\noindent
Clearly $\psi(x)$ is a $\Sigma^1_1$ statement about $x$.

By the definition of $G$, $\psi$ correctly defines $K_n$ on $V \setminus U$. For $x \in U \cap D$, ``$b \in M_x$'' and since $\Sigma^1_1$ statements are absolute between transitive models of $\text{ZF}_N$, $\psi$ correctly defines $K_n$ on $U \cap D$. 
Since $\tau < \omega_1$ and distinct $x \in U$ determine distinct well-founded $L_{\alpha}$, there can be only countably many $x \in U$ which code $L_\alpha$ with $\alpha < \tau$. If $x \in U \cap E$ then $M_x \cong L_{\alpha}$ where $\alpha < \tau$.  Thus $U \cap E$ is countable.  Therefore the formula $\psi$ correctly defines $K_n$ on $V$ except for the countable set $U \cap E$.

To see that $(K_n \cap V) \setminus (U \cap E)$ is $\Delta^1_1$, note that the formula $\psi$ is equivalent to the $\Sigma^1_1$ formula
\begin{align*}
\exists i_1, &\ldots, \exists i_n \in \omega, \exists a_1, \ldots, \exists a_n \in \omega^\omega \,\,
    [M_x \models ``i_1, \ldots, i_n \in \omega^\omega \text{''} \wedge \\
    &TC(i_1)=a_1, \ldots, TC(i_n)=a_n \, \wedge
    \, a_1, \ldots, a_n \in B]
\end{align*}
which is equivalent to the $\Pi^1_1$ formula
\begin{align*}
\exists i_1, &\ldots, \exists i_n \in \omega, \forall a_1, \ldots, \forall a_n \in \omega^\omega \,\,
    [M_x \models ``i_1, \ldots, i_n \in \omega^\omega \text{''} \wedge \\
    &\, \left(TC(i_1)=a_1, \ldots, TC(i_n)=a_n \right) \Rightarrow a_1 \in B, \ldots, a_n \in B ].
\end{align*}
Thus $\psi$ defines a $\Delta^1_1$ set which gives $K_n$ on $V \setminus (U \cap E)$, and since $(U \cap E)$ is countable, $K_n \cap V$ is $\Delta^1_1$.

For $x \in X \setminus V$ each section $G_x = \bigcup_{n=1}^{\infty} f_n(x)$. Thus for each $x \in X \setminus V$ we have
\begin{align*}
|B \cap G_x| \geq n &\iff
\exists \text{ distinct}\ k_1, \ldots, k_n [f_{k_1}(x) \in B, \ldots, f_{k_n}(x) \in B] \\
&\iff x \in \bigcup_{(k_1,\ldots,k_n)} f_{k_1}^{-1}(B) \cap \ldots \cap f_{k_n}^{-1}(B).
\end{align*}
Therefore $K_n \cap X \setminus V$ is $\mathbf{\Delta}^1_1$.

Finally we show that property (3) holds for $G$.
Proceeding by contradiction suppose that $G$ could be written as a countable union of $\mathbf{\Pi}^1_1$ graphs $G_m$. Choose a sequence $(x^m)$ from $\omega^\omega$ and formulas $\psi_m (x,y)$ so that $\psi_m$ are $\mathbf{\Pi}^1_1 (x^m)$ formulas defining the $G_m$.  Let $x' \in \omega^\omega$ be such that $x'$ codes the sequence $(x^{m})_{m=0}^{\infty}$ and choose $x \in U$ and $\alpha$ such that $M_x \cong L_{\alpha}$ 
and $x' \in L_{\alpha}$. Next let $\beta \geq \alpha$ be the least ordinal such that $(\beta \text{ is good and a limit of good ordinals}) 
\wedge \forall \phi \in \Sigma^1_2 \, (L_{\beta} \models \neg \phi \Rightarrow \forall \tau > \beta \, L_{\tau} \models \neg \phi)$.

From the definition of $G'$ we have that $\omega^\omega \cap L_{\beta} \sbset G'_x$. Furthermore if $y \in L_{\beta}$ 
then for some good ordinal $\delta < \beta$, $y \in L_{\delta}$. 
Since $\beta$ was chosen to be minimal, we have that $\forall \gamma < \delta \, [\neg(\gamma$ is good and a limit of good ordinals $) \vee \exists \phi \in \Sigma^1_2 \,\,(L_{\delta} \models \neg \phi \wedge \exists \tau > \gamma\,  (L_{\tau} \models \phi))]$. 
In fact we may replace ``$\exists \tau > \gamma$'' in the previous statement with ``$\exists \tau > \gamma, \tau < \beta$''. 
Thus $\delta$ witnesses that $L_\beta \models \Omega'(x,y)$.

Since $\beta$ was chosen so that $\Sigma^1_2$ statements are stabilized at $\beta$, 
we have that $L_{\beta} \models ``\{y: \exists m \, \psi_m (x^m,y)\}$ is countable''. However, $L_{\beta} \models ``\omega^\omega$ is uncountable''. Thus we may let $y,z \in L_{\beta}$ such that
\begin{align*}
L_{\beta} &\models \Omega(x, \langle y,z \rangle ) \text{ and}\\
L_{\beta} &\models \forall m \,\, \neg \psi_m (x^m,\langle y,z \rangle ).
\end{align*}
Then by absoluteness $\mathbf{L} \models \forall m \, \neg \psi(x^m,\langle y,z \rangle )$. 
Thus $\forall m\, (x,\langle y,z \rangle ) \not \in G_m$. However this contradicts the fact that 
$\mathbf{L} \models \Omega(x,\langle y,z \rangle )$ by absoluteness and therefore $(x,\langle y,z \rangle ) \in G$.

\end{proof}

This naturally leads us to ask:
\begin{prob}
Can one show in ZFC that special $\mathbf{\Pi}^1_1$ sets exist?
\end{prob}

\section{Uniformly $\sigma$-finite implies joint measurability but
  does the  converse hold} \label{sec:five}

Let $(X,\mathcal{B}(X))$ and $(Y, \mathcal{B}(Y))$ be Polish spaces,
let $\phi\colon X \rightarrow Y$ be $\mathcal{B}$-measurable and let $\mu$
and $\nu$ be measures on $\mathcal{B}(X)$ and $\mathcal{B}(Y)$. 
Let $y \mapsto \mu_y$ be a {\em measure kernel}, that is, each $\mu_y$ is a measure
on the Borel subsets of $X$ and such that for each Borel set $E$ in
$X$, the map $y \mapsto \mu_y(E)$ is Borel measurable 
(this is part of the definition of a disintegration).
Let $\mathcal{K}(X)$ be the space of compact subsets of $X$ equipped with
the Vietoris topology or equivalently the topology generated by the
Hausdorff metric.

\begin{lem}\label{meas} If for every $y$,  $\mu_y(X) < \infty,$ then the map
  $F\colon Y\times\mathcal{K}(X)\mapsto \R$, given by $F(y,K) = \mu_y(K)$, is Borel measurable.
\end{lem}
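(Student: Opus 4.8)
The plan is to establish Borel measurability of $F(y,K) = \mu_y(K)$ by exploiting the structure of the Vietoris topology on $\mathcal{K}(X)$ together with the fact that, for a fixed Borel set $E$, the map $y \mapsto \mu_y(E)$ is already Borel by hypothesis. First I would fix a countable base $\{U_i\}_{i \in \omega}$ for $X$ closed under finite unions. Recall that the Vietoris topology on $\mathcal{K}(X)$ is generated by sets of the form $\{K : K \subseteq U\}$ and $\{K : K \cap U \neq \emptyset\}$ for $U$ open, and that $K \mapsto \mathbf{1}[K \subseteq U_i]$ and $K \mapsto \mathbf{1}[K \cap U_i \neq \emptyset]$ are Borel (indeed Baire-class-1-like) functions on $\mathcal{K}(X)$. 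The idea is to approximate $\mu_y(K)$ from above by $\mu_y$ of open sets containing $K$, using outer regularity of the finite Borel measure $\mu_y$ on the Polish space $X$.

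The key steps, in order, are as follows. \emph{Step 1:} Show that for each fixed open $U \subseteq X$ (taken from the countable algebra generated by the base), the map $(y,K) \mapsto \mu_y(U) \cdot \mathbf{1}[K \subseteq U]$ is Borel on $Y \times \mathcal{K}(X)$, since it is a product of a Borel function of $y$ and a Borel function of $K$. \emph{Step 2:} For a fixed compact $K$, outer regularity gives $\mu_y(K) = \inf\{\mu_y(U) : U \supseteq K,\ U \text{ open}\}$, and moreover this infimum can be realized along a countable family of open sets from our fixed algebra — for instance, finite unions of basic open sets whose closures are small, chosen to shrink down to $K$. The precise statement I would prove is that
\[
\mu_y(K) = \inf_{i}\, \bigl\{ \mu_y(U_i) : K \subseteq U_i \bigr\},
\]
where the infimum ranges over the countable family, using that $K$ compact and $X$ metrizable let us sandwich $K$ between nested basic open sets. \emph{Step 3:} Conclude that
\[
F(y,K) = \inf_{i : K \subseteq U_i} \mu_y(U_i) = \inf_i \Bigl( \mu_y(U_i) + \infty \cdot \mathbf{1}[K \not\subseteq U_i] \Bigr),
\]
which exhibits $F$ as a countable infimum of Borel functions of $(y,K)$ (interpreting the $\infty$ term appropriately, or equivalently taking the infimum only over the Borel set where $K \subseteq U_i$), hence Borel. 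Finiteness of every $\mu_y$ is what makes the outer-regularity approximation and the arithmetic here legitimate.

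The main obstacle I anticipate is Step 2: verifying that a \emph{single countable} family of open sets $\{U_i\}$, not depending on $y$, suffices to compute $\mu_y(K)$ as an infimum simultaneously for all $y$. This requires a purely topological approximation of compact sets by basic open supersets — e.g., for each $m$ let $U_i$ range over all finite unions $\bigcup_{j \in J} U_j$ of basic sets each of diameter $< 1/m$; given $K$ and $\varepsilon$, compactness produces such a finite union $U \supseteq K$ with $U$ contained in the $1/m$-neighborhood of $K$, and then $\mu_y(U) \to \mu_y(K)$ as $m \to \infty$ by continuity of $\mu_y$ from above along the nested intersection $\bigcap_m (\text{$1/m$-neighborhood}) = K$, using $\mu_y(X) < \infty$. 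Once this uniform approximating family is in hand, the measurability of the indicator $\{K : K \subseteq U_i\}$ in the Vietoris topology and the Borel measurability of $y \mapsto \mu_y(U_i)$ combine routinely to finish. A minor technical point to handle carefully is that $\{K : K \subseteq U\}$ is open in $\mathcal{K}(X)$ only for $U$ open, so one should stick to open $U_i$ throughout rather than passing to closures prematurely.
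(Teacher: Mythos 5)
Your proof is correct and follows essentially the same route as the paper: both rest on the identity $\mu_y(K)=\inf\{\mu_y(\tilde U): K\subseteq \tilde U\}$, where $\tilde U$ ranges over the countable family of finite unions of basic open sets, justified by outer regularity of the finite measure $\mu_y$ plus compactness of $K$, and on the fact that $\{K: K\subseteq \tilde U\}$ is open in the Vietoris topology. The paper packages this as the sublevel set $\{(y,K):\mu_y(K)<c\}$ being a countable union of rectangles $D_n\times U_n$, whereas you write $F$ as a countable infimum of Borel functions; these are interchangeable formulations of the same argument.
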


\begin{proof}
Fix a basis for the topology of $X$, say $\{V_n\}_{n=1}^\infty.$
Enumerate sets of the form $\{K\colon K\subseteq V_{i_1}\cup\ldots\cup V_{i_j}\}$,
say  $\{U_n\}_{n=1}^\infty$. $U_n$ is an open set in $\mathcal{K}(X)$, and let 
$\tilde{U}_n=V_{i_1}\cup\ldots\cup V_{i_j}$ be the corresponding open set in $X$. 
Fix a number $c$. For each $n$, let $D_n
=\{y\colon \mu_y(\tilde{U}_n) < c\}$. We have $\{(y,K)\colon \mu_y(K) < c\} =\bigcup_n (D_n
\times U_n)$.
\end{proof}

\begin{lem}\label{approx} If for every $y, \ \mu_y(X) < \infty,$ then for each
  $\epsilon > 0$, there is a Borel measurable map $y \mapsto K \in
  \mathcal{K}(X)$ such that for every $y, \  \mu_y(X\setminus K_y) < \epsilon.$
\end{lem}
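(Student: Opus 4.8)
The plan is to construct the Borel map $y \mapsto K_y$ by exhausting each finite measure $\mu_y$ by compact sets in a uniformly measurable way, using inner regularity of finite Borel measures on Polish spaces together with Lemma~\ref{meas}. First I would fix a countable basis $\{V_n\}$ for $X$ and, for each $y$, note that since $\mu_y$ is a finite Borel measure on a Polish space it is inner regular, so there is a compact $K \subseteq X$ with $\mu_y(X \setminus K) < \epsilon$. The task is to select such a $K$ measurably in $y$.

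The key steps, in order: (1) Reduce to the case where $X$ is (homeomorphic to) a subset of the Hilbert cube, or more concretely work directly with a complete metric $d$ on $X$; for each $k$ cover $X$ by countably many open balls $B_{k,1}, B_{k,2}, \dots$ of diameter $< 1/k$ whose closures are used to build a candidate ``totally bounded up to small measure'' set. (2) For each $k$ and each $y$, choose $m_k(y)$ least such that $\mu_y\bigl(X \setminus \bigcup_{i \le m_k(y)} B_{k,i}\bigr) < \epsilon \cdot 2^{-k}$; the function $y \mapsto m_k(y)$ is Borel since each set $\{y : \mu_y(X \setminus \bigcup_{i\le m} B_{k,i}) < \epsilon 2^{-k}\}$ is Borel by the measure-kernel hypothesis (this is where Lemma~\ref{meas}, or just the definition of a measure kernel applied to the open sets $\bigcup_{i \le m} B_{k,i}$, is invoked). (3) Set $A_y = \bigcap_k \overline{\bigcup_{i \le m_k(y)} B_{k,i}}$ and $K_y = A_y$; then $A_y$ is closed and totally bounded, hence compact in the complete space $X$, and $\mu_y(X \setminus A_y) \le \sum_k \epsilon 2^{-k} = \epsilon$ (up to adjusting constants), using that $X \setminus A_y \subseteq \bigcup_k \bigl(X \setminus \bigcup_{i \le m_k(y)} B_{k,i}\bigr)$ together with monotonicity and countable subadditivity. (4) Verify the map $y \mapsto K_y$ is Borel measurable into $\mathcal{K}(X)$ with the Vietoris topology: a subbasic condition $\{K : K \cap V \ne \emptyset\}$ or $\{K : K \subseteq V\}$ pulls back to a Borel set in $y$ because $K_y$ is determined by the Borel data $(m_k(y))_k$ via a Borel operation on $\mathcal{K}(X)$ (the intersection of a decreasing sequence of finite unions of fixed closed balls).

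The main obstacle I expect is step (4), ensuring genuine Borel measurability of $y \mapsto K_y$ rather than mere measurability of $y \mapsto \mu_y(K_y)$: one must check that the assignment taking the integer sequence $(m_k(y))$ to the compact set $\bigcap_k \overline{\bigcup_{i\le m_k(y)} B_{k,i}}$ is a Borel map from $\omega^\omega$ to $\mathcal{K}(X)$, and that intersecting a nested sequence of compact sets is Borel (indeed continuous from above) on $\mathcal{K}(X)$. A mild subtlety is that $\overline{\bigcup_{i \le m} B_{k,i}}$ need not be compact on its own — it is only the intersection over all $k$ that forces total boundedness — so the compactness of $K_y$ must be argued from the simultaneous control at every scale $1/k$, and one should arrange the balls at level $k+1$ to refine those at level $k$ (or simply not worry about nesting and argue total boundedness directly: for each $k$, $A_y$ is covered by the finitely many balls $B_{k,i}$, $i \le m_k(y)$, of diameter $<1/k$). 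Once measurability into $\mathcal{K}(X)$ is established, the measure estimate is the routine Borel--Cantelli-style computation sketched in step (3).
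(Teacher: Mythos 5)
Your construction is correct, but it is a genuinely different route from the paper's: the paper disposes of Lemma~\ref{approx} in one line by citing Theorem 2.2 of \cite{M} (Mauldin's Borel parametrization theorem), whereas you give a self-contained proof by making the classical tightness argument for finite Borel measures on Polish spaces (Ulam's theorem) uniform in $y$. What your approach buys is transparency and independence from the parametrization machinery: the only input is the kernel property that $y\mapsto\mu_y(B)$ is Borel for each fixed Borel $B$ (you are right that Lemma~\ref{meas} itself is not needed here), plus completeness of the metric; what the citation buys is brevity and a statement already packaged at the right level of generality. One precision worth making in your step (4): the justification ``intersecting a nested sequence of compact sets is continuous from above on $\mathcal{K}(X)$'' does not literally apply, since, as you yourself observe, the partial intersections $D_j(y)=\bigcap_{k\le j}\overline{\bigcup_{i\le m_k(y)}B_{k,i}}$ are closed but generally not compact (closed balls in a Polish space need not be compact). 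The correct argument is to check the generating Vietoris/Effros conditions directly: writing an open $U$ as an increasing union of closed sets $F_j$, one shows $K_y\cap F_j\neq\emptyset$ if and only if every finite partial intersection $F_j\cap D_K(y)$ is nonempty --- the forward direction is trivial, and the converse follows by choosing $x_K\in F_j\cap D_K(y)$ and extracting a Cauchy subsequence via the pigeonhole principle (each tail lies in a finite union of sets of diameter $\le 1/k$) and completeness. Since the condition ``$F_j\cap D_K(y)\neq\emptyset$'' depends only on the finite tuple $(m_1(y),\dots,m_K(y))$, the set $\{y: K_y\cap U\neq\emptyset\}$ is Borel, which is exactly the measurability you need. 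With that substitution your proof is complete; the measure estimate in step (3) is fine as stated once the thresholds are taken to be $\epsilon 2^{-k-1}$.
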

\begin{proof}
This lemma follows from Theorem 2.2 of \cite{M}.
\end{proof}

\begin{thm}\label{jointmeas}
Suppose $\{\mu_y : y \in Y\}$ is a $\sigma$-finite disintegration of
$\mu$ with respect to $(\nu,\phi)$. Consider the following statements.
\begin{enumerate}
    \item $\{\mu_y : y \in Y\}$ is uniformly $\sigma$-finite.
    \item There is a sequence of Borel mappings $y \mapsto K_n(y)$ from $Y$ into $\mathcal{K}(X)$ satisfying
        \begin{itemize}
            \item $\forall y \, \forall n \,\, \mu_y(K_n(y)) < \infty$
            \item $\forall y \,\, \mu_y(X \setminus \bigcup_n K_n(y)) = 0$.
        \end{itemize}
    \item The mapping $(y,K) \mapsto \mu_y(K)$ from $Y \times
      \mathcal{K}(X) \rightarrow \mathbb{R}$ is Borel measurable.
\end{enumerate}
Then statements (1) and (2) are equivalent and each them implies statement (3). Moreover, if each measure
$\mu_y$ is purely atomic, then statements (1),(2), and (3) are equivalent.
\end{thm}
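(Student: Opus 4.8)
The plan is to establish the cycle of implications $(2)\Rightarrow(1)\Rightarrow(3)$ together with $(1)\Rightarrow(2)$, and then, under the extra hypothesis that each $\mu_y$ is purely atomic, to close the loop with $(3)\Rightarrow(1)$. The two directions $(2)\Rightarrow(1)$ and $(1)\Rightarrow(3)$ are the clean core. For $(2)\Rightarrow(1)$: given Borel maps $y\mapsto K_n(y)\in\mathcal{K}(X)$ as in (2), put $B_n=\{x\in X:x\in K_n(\phi(x))\}$. This is Borel because $\phi$ is Borel and $\{(y,x):x\in K_n(y)\}$ is Borel in $Y\times X$. Since $\mu_y$ is concentrated on $\phi^{-1}(y)$ and $\phi(x)=y$ on that fiber, one has $B_n\cap\phi^{-1}(y)=K_n(y)\cap\phi^{-1}(y)$, so $\mu_y(B_n)\le\mu_y(K_n(y))<\infty$ for every $y$, and likewise $\mu_y(X\setminus\bigcup_n B_n)=\mu_y(\phi^{-1}(y)\setminus\bigcup_n K_n(y))=0$; thus $(B_n)$ witnesses uniform $\sigma$-finiteness. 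For $(1)\Rightarrow(3)$: taking $(B_n)$ from (1) and replacing $B_n$ by $B_1\cup\cdots\cup B_n$ we may assume $B_n\uparrow$. Each $y\mapsto\mu_y\restriction B_n$ is a finite measure kernel (for Borel $E$, $y\mapsto\mu_y(E\cap B_n)$ is Borel), so Lemma~\ref{meas} makes $(y,K)\mapsto\mu_y(K\cap B_n)$ Borel on $Y\times\mathcal{K}(X)$. Since $K\cap B_n\uparrow K\cap\bigcup_n B_n$ and $\mu_y(K\setminus\bigcup_n B_n)=0$, continuity from below gives $\mu_y(K)=\sup_n\mu_y(K\cap B_n)$, a countable supremum of Borel functions, hence Borel. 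Combined with $(2)\Rightarrow(1)$, both (1) and (2) imply (3).

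For $(1)\Rightarrow(2)$ I would first pass to an $F_\sigma$-refinement of the witnessing sequence: writing each $B_n=\bigcup_m F_{n,m}$ with $F_{n,m}$ closed, we still have $\mu_y(F_{n,m})\le\mu_y(B_n)<\infty$, so we may assume each $B_n$ is closed, hence Polish in the subspace topology, and $\mathcal{K}(B_n)$ is a closed subspace of $\mathcal{K}(X)$. Applying Lemma~\ref{approx} to the finite measure kernel $y\mapsto\mu_y\restriction B_n$ on the Polish space $B_n$, with $\epsilon=1/j$, yields Borel maps $y\mapsto C_{n,j}(y)\in\mathcal{K}(B_n)\subseteq\mathcal{K}(X)$ with $\mu_y(B_n\setminus C_{n,j}(y))<1/j$. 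Since $C_{n,j}(y)\subseteq B_n$ we get $\mu_y(C_{n,j}(y))\le\mu_y(B_n)<\infty$, and for each $n$, $\mu_y\bigl(B_n\setminus\bigcup_j C_{n,j}(y)\bigr)=0$. Re-enumerating $\{C_{n,j}:n,j\in\omega\}$ as a single sequence then gives maps satisfying (2): the uncovered part lies in $(X\setminus\bigcup_n B_n)\cup\bigcup_n\bigl(B_n\setminus\bigcup_j C_{n,j}(y)\bigr)$, a $\mu_y$-null set.

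For the ``Moreover'' clause it remains to prove $(3)\Rightarrow(1)$ when each $\mu_y$ is purely atomic. The embedding $x\mapsto\{x\}$ of $X$ into $\mathcal{K}(X)$ is continuous, so composing it with the Borel map of (3) shows $(y,x)\mapsto\mu_y(\{x\})$ is Borel; hence $W=\{(y,x):\mu_y(\{x\})>0\}$ is a Borel subset of $Y\times X$. Its sections $W_y$ are the atoms of the $\sigma$-finite measure $\mu_y$, hence countable (and each atom has finite mass, by $\sigma$-finiteness), so by the Lusin--Novikov uniformization theorem $W=\bigcup_n\operatorname{graph}(g_n)$ for Borel partial functions $g_n\colon Y\to X$. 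Put $D_n=\{x\in X:x=g_n(\phi(x))\}$, a Borel set. Each fiber $D_n\cap\phi^{-1}(y)$ is either empty or the single point $g_n(y)$, so $\mu_y(D_n)\le\mu_y(\{g_n(y)\})<\infty$ for all $y$; and since atoms of $\mu_y$ lie in $\phi^{-1}(y)$ and $W=\bigcup_n\operatorname{graph}(g_n)$, we get $\bigcup_n D_n\cap\phi^{-1}(y)\supseteq W_y$. As $\mu_y$ is purely atomic it is concentrated on $W_y$, whence $\mu_y(X\setminus\bigcup_n D_n)=0$, and $(D_n)$ witnesses uniform $\sigma$-finiteness.

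I expect the main technical friction to be in $(1)\Rightarrow(2)$: one must arrange that the compact sets produced by Lemma~\ref{approx} actually lie inside the finite-measure sets $B_n$ (otherwise $\mu_y(C_{n,j}(y))$ could be infinite), which is exactly why the reduction to closed $B_n$ and the identification of $\mathcal{K}(B_n)$ as a closed subspace of $\mathcal{K}(X)$ are needed. The only place the purely atomic hypothesis is genuinely used is $(3)\Rightarrow(1)$, where — via the observation that singletons are compact — joint measurability of $(y,K)\mapsto\mu_y(K)$ is precisely what allows the atoms of the $\mu_y$ to be enumerated Borel-measurably by Lusin--Novikov; without atomicity this step breaks down, as the $\ch$ constructions of Section~\ref{sec:two} and Theorem~\ref{dislemma} show.
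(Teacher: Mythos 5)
Your overall architecture matches the paper's. Your $(2)\Rightarrow(1)$ via $B_n=\{x\in X: x\in K_n(\phi(x))\}$ is exactly the paper's $\pi_X(G_n\cap \mathrm{Graph}(\phi))$ for the epigraph $G_n$; your $(1)\Rightarrow(3)$ by applying Lemma~\ref{meas} to the truncated finite kernels $\mu_{ny}(E)=\mu_y(E\cap B_n)$ and letting $\mu_y(K\cap B_n)$ increase to $\mu_y(K)$ is the paper's argument; and your $(3)\Rightarrow(1)$ in the purely atomic case --- composing with $x\mapsto\{x\}$ to make $(y,x)\mapsto\mu_y(\{x\})$ Borel and applying Lusin--Novikov to the Borel set of atoms --- is the paper's argument transported from $Y\times\mathcal{K}(X)$ to $Y\times X$. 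These three steps are correct.

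The one genuine flaw is in your $(1)\Rightarrow(2)$. You correctly diagnose the friction point: the compact sets produced by Lemma~\ref{approx} applied to $\mu_{ny}$ must be arranged to lie inside $B_n$, or else $\mu_y(C_{n,j}(y))$ need not be finite. But your repair fails at its first move: you cannot ``write each $B_n=\bigcup_m F_{n,m}$ with $F_{n,m}$ closed,'' because a Borel subset of a Polish space need not be $F_\sigma$ (the irrationals in $\mathbb{R}$ already are not), and the witnessing sets in the definition of uniform $\sigma$-finiteness are arbitrary Borel sets. Nor can you replace the given $B_n$ by $F_\sigma$ subsets after the fact, since the inner-regular closed approximations of $B_n$ depend on $y$, and you need one sequence of sets that works for every $y$ simultaneously. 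The correct repair is to use the parametrized inner regularity theorem of \cite{M} (Theorem 2.2, the source of Lemma~\ref{approx}) in its full form, which for a finite Borel kernel and a prescribed Borel set produces a Borel selection of compact \emph{subsets} of that set approximating it in measure; applied to $B_n$ with the kernel $\mu_{ny}$ this gives $C_{n,j}(y)\subseteq B_n$, hence $\mu_y(C_{n,j}(y))\le\mu_y(B_n)<\infty$, and the rest of your re-enumeration argument then goes through. (The paper's own write-up of this implication is terse and leaves the containment implicit, but that is the intended mechanism; your instinct about where the difficulty lies was right, only the chosen fix is wrong.)
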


\begin{proof}

(1) $\Rightarrow$ (2) \\
Fix $\{B_n\}$ witnessing the kernel $y\mapsto \mu_y$ is uniformly
$\sigma$-finite. We may and do assume that for each $n, B_n \subseteq
B_{n+1}.$ For each $n$, let $\mu_{ny}(E) = \mu_y(E\cap B_n)$.
then by Lemma~\ref{approx}, we obtain Borel measurable maps $y\mapsto
K_{nmy}\in \mathcal{K}(X)$ such that for every $y, \ \mu_{ny}(X
\setminus \bigcup_m K_{nmy}) = 0.$ The implication follows.

(1) $\Rightarrow$ (3) \\
Continuing with the preceding argument, we see that for each $n$, the
map $F_n(y,K) = \mu_y(B_n\cap K)$ is Borel measurable and $F_n(y,K)$
converges up to $F(y,K)$.

(2) $\Rightarrow$ (1) \\
For each $n$ let $G_n$ be the `epigraph' of the mapping $y \mapsto K_n(y)$. By `epigraph' we mean
\[ G_n = \{(y,x) : x \in K_n(y)\}. \]

Note that a function $f \colon Y \rightarrow \mathcal{K}(X)$ is Borel 
iff the epigraph, $\{(y,x) \colon x \in f(y)\}$ is Borel in $Y \times X$.

Let $B_n = \pi_X (G_n \cap Graph(\phi))$.  This projection is 1-to-1 therefore $B_n$ is Borel.  Observe that
\begin{align*}
    \mu_y(B_n) &= \mu_y(K_n(y) \cap \phi^{-1}(y)) \\
        &= \mu_y(K_n(y)) < \infty \quad \text{and} \\
%\end{align*}
%\begin{align*}
    \mu_y(X \setminus \bigcup_n B_n) &=
        \mu_y \(X \setminus \bigcup_n (K_n(y) \cap \phi^{-1}(y))\) \\
        &= \mu_y \(X \setminus \bigcup_n K_n(y)\) = 0.
\end{align*}

Finally, let us assume that for every $y$, the measure $\mu_y$ is
purely atomic and statement (3) holds. Let $W = \{(y,K)\colon \mu_y(K) > 0
\ \text{and} \ \card(K) = 1\}.$ Then $W$ is a Borel subset of $Y\times
\mathcal{K}(X)$ with countable sections. Therefore, there are Borel
functions $y \mapsto\mathcal{K}(X)$ whose graphs fill up $W$. This
means statement (2) holds.

\end{proof}

\begin{prob}
Is it true that a disintegration is uniformly $\sigma$-finite if and
only if the map $(y,K)\mapsto \mu_y(K)$ is jointly measurable?
\end{prob}

We would like to mention the following problem concerning the mixture
operator defined by a measure transition kernel.

\begin{prob} Suppose we are given
a measure kernel $y \mapsto \mu_y$ (defined at the beginning of this section). 
Consider the mixture operator $T$ defined by
$$
T(\lambda)(E) := \int_Y \mu_y(E)d\lambda(y).
$$
 Suppose this operator has the property that it maps $\sigma$-finite
 (signed) measures on $Y$ to $\sigma$-finite (signed) measures on $X$
 and the operator $T$ is lattice preserving, i.e., T takes mutually
 singular measures to mutually singular measures. Is there a
 universally measurable map $\phi:X \mapsto Y$ such that for each $y,\
 \mu_y(X \setminus \phi^{-1}(y)) = 0?.$
\end{prob}

We mention that it was shown in \cite{MPW} that the answer is yes
assuming Martin's axiom or even weaker that a medial limit exists
provided for each $y,\ \mu_y$ is a probability measure.

\end{document}